\theoremstyle{plain}
\newtheorem{theorem}{Theorem}[section]
\newtheorem{lemma}[theorem]{Lemma}
\newtheorem{corollary}[theorem]{Corollary}
\theoremstyle{definition}
\newtheorem{example}[theorem]{Example}
\theoremstyle{remark}
\newtheorem{remark}[theorem]{Remark}
\def\a{\mathcal{P}_E(G)}
\def\c{\overline{\mathcal{P}_E(G)}}
\def\d{\overline{\mathcal{P}_E(G^*)}}
\def\m{\mathcal{M}(G)}
\def\g{\mathcal{G}_{\m}}
\begin{document}
\title[The complement of enhanced power graph of a finite group]{The complement of enhanced power graph of a finite group}



\author[Parveen, Jitender Kumar]{Parveen, $\text{Jitender Kumar}^{^*}$ }
\email{p.parveenkumar144@gmail.com,jitenderarora09@gmail.com}

\begin{abstract}
The enhanced power graph $\a$ of a finite group $G$ is the simple undirected graph whose vertex set is $G$ and two distinct vertices $x, y$ are adjacent if $x, y \in \langle z \rangle$ for some $z \in G$. In this article, we give an affirmative answer of the question posed by Cameron \cite[Question 20]{a.cameron2021graphs} which states that: Is it true that the complement of the enhanced power graph $\c$ of a non-cyclic group $G$ has only one connected component apart from isolated vertices$?$ We classify all finite groups $G$ such that the graph $\c$ is bipartite. We show that the graph $\c$ is weakly perfect. Further, we study the subgraph $\d$ of $\c$ induced by all the non-isolated vertices of $\c$. We classify all finite groups $G$ such that the graph is $\d$ is unicyclic and pentacyclic. We prove the non-existence of finite groups $G$ such that the graph $\d$ is bicyclic, tricyclic or tetracyclic. Finally, we characterize all finite groups $G$ such that the graph $\d$ is outerplanar, planar, projective-planar and toroidal, respectively.
\end{abstract}

\subjclass[2020]{05C25}

\keywords{Enhanced power graph, maximal cyclic subgroup, genus of a graph. \\ *  Corresponding author}

\maketitle
\section{Introduction}
The study of graphs related to various algebraic structures becomes important, because graphs of this type have valuable applications and are related to automata theory (see \cite{a.kelarev2009cayley,kelarev2004labelled} and the books \cite{kelarev2003graph,kelarev2002ring}). Certain graphs, viz. power graphs, commuting graphs, Cayley graphs etc., associated to groups have been studied by various researchers, see \cite{a.chakrabarty2009undirected,a.kelarev2002undirected,a.segev2001commuting}.  Segev \cite{a.segev1999finite,a.segev2001commuting}, Segev and Seitz \cite{a.segev2002anisotropic} used combinatorial parameters of certain commuting graphs to establish long standing conjectures in the theory of division algebras. A variant of commuting graphs on groups has played an important role in classification of finite simple groups, see \cite{b.aschbacher2000finite}. Hayat \emph{et al.} \cite{a.hayat2019novel} used commuting graphs associated with groups to establish some NSSD (non-singular with a singular deck) molecular graph.

In order to measure how much the power graph is close to the commuting graph of a group $G$, Aalipour \emph{et al.} \cite{a.Cameron2016} introduced a new graph called \emph{enhanced power graph.} The enhanced power graph of a group $G$ is the simple undirected graph whose vertex set is $G$ and two distinct vertices $x, y$ are adjacent if $x, y \in \langle z \rangle$ for some $z \in G$. Indeed, the enhanced power graph contains the power graph and is a spanning subgraph of the commuting graph. Aalipour \emph{et al.} \cite{a.Cameron2016} characterized the finite group $G$, for which equality holds for either two of the three graphs viz. power graph, enhanced power graph and commuting graph of $G$. Further, the enhanced power graphs have received the considerable attention by various researchers. Bera \emph{et al.} \cite{a.Bera2017} characterized the abelian groups and the non abelian $p$-groups having dominatable enhanced power graphs. Dupont \emph{et al.} \cite{a.dupont2017rainbow} determined the rainbow connection number of enhanced power graph of a finite group $G$. Later, Dupont \emph{et al.} \cite{a.dupont2017enhanced} studied the graph theoretic properties of enhanced quotient graph of a finite group $G$.  A complete description of finite groups with enhanced power graphs admitting a perfect code have been studied in \cite{a.ma2017perfect}. Ma \emph{et al.} \cite{a.ma2020metric} investigated the metric dimension of an enhanced power graph of finite groups. Hamzeh et al. \cite{a.hamzeh2017automorphism} derived the automorphism groups of enhanced power graphs of finite groups.  Zahirovi$\acute{c}$ \emph{et al.} \cite{a.zahirovic2020study} proved that two finite abelian groups are isomorphic if their enhanced power graphs are isomorphic. Also, they supplied a characterization of finite nilpotent groups whose enhanced power graphs are perfect.  Recently, Panda \emph{et al.} \cite{a.panda2021enhanced} studied the graph-theoretic properties, viz. minimum degree, independence number, matching number, strong metric dimension and perfectness, of enhanced power graphs over finite abelian groups. Moreover, the enhanced power graphs associated to non-abelian groups such as semidihedral, dihedral, dicyclic, $U_{6n}$, $V_{8n}$ etc., have been studied in \cite{a.dalal2021enhanced, a.panda2021enhanced}. Bera \emph{et al.} \cite{a.bera2021connectivity} gave an upper bound for the vertex connectivity of enhanced power graph of any finite abelian group. Moreover, they classified the finite abelian groups whose proper enhanced power graphs are connected. The connectivity of the complement of the enhanced power graph has been studied in \cite{a.ma2021note}. Abdollahi \emph{et al.} \cite{a.Abdollahi2007} investigated some graph theoretic properties (such as diameter, regularity) of the non-cyclic graph $\Gamma (G)$ of a group $G$. Note that the graph $\Gamma (G)$ is same as the complement of the enhanced power graph of $G$. Moreover, they proved that the clique number of $\Gamma (G)$ is finite if and only if $\Gamma (G)$ has no infinite clique.

Cameron \cite{a.cameron2021graphs} posed a question that: Is it true that the complement of the enhanced power graph of a finite non-cyclic group has just one connected components apart from isolated vertices? Then a natural question arises as to investigate graph theoretic properties of the connected graph obtained by removing isolated vertices. In order to investigate this question, in this paper, we consider the complement of the enhanced power graph of a finite group $G$. The paper is arranged as follows. In Section $2$, we provide the necessary background material and fix our notations used throughout the paper. In section $3$, we have proved that the complement of the enhanced power graph of a finite group has just one  connected components apart from isolated vertices. Moreover, we obtain the girth and the chromatic number  of $\c$. Further, we classify all finite groups such that the subgraph $\d$, obtained by deleting isolated vertices, of $\a$  is dominatable, Eulerian, unicyclic and pentacyclic, respectively.  In Section $4$, we classify all finite groups $G$ such that the graph $\d$ is outerplanar, planar, projective-planar and toroidal, respectively.
\section{Preliminaries}
In this section, first we recall the graph theoretic notions from  \cite{b.westgraph}. A \emph{graph} $\Gamma$ is a pair  $\Gamma = (V, E)$, where $V(\Gamma)$ and $E(\Gamma)$ are the set of vertices and edges of $\Gamma$, respectively. Two distinct vertices $u_1, u_2$ are $\mathit{adjacent}$, denoted by $u_1 \sim u_2$, if there is an edge between $u_1$ and $u_2$. Otherwise, we write it as $u_1 \nsim u_2$. Let $\Gamma$ be a graph. A \emph{subgraph}  $\Gamma'$ of $\Gamma$ is the graph such that $V(\Gamma') \subseteq V(\Gamma)$ and $E(\Gamma') \subseteq E(\Gamma)$. For $X \subseteq V(\Gamma)$, the subgraph of $\Gamma$ induced by the set $X$ is the graph with vertex set $X$ and its two distinct vertices are adjacent if and only if they are adjacent in $\Gamma$. The \emph{complement} $\overline{\Gamma}$ of $\Gamma$ is a graph with same vertex set as $\Gamma$ and distinct vertices $u, v$ are adjacent in $\overline{\Gamma}$ if they are not adjacent in $\Gamma$. A graph $\Gamma$ is said to be \emph{complete} if any two distinct vertices are adjacent. We denote $K_n$ by the complete graph of $n$ vertices. A graph $\Gamma$ is said to be $k$-partite if the vertex set of $\Gamma$ can be partitioned into $k$ subsets, such that no two vertices in the same subset of the partition are adjacent. A \emph{complete k-partite} graph, denoted by $K_{n_1,n_2,\ldots ,n_k}$, is a $k$-partite graph having its parts sizes $n_1,n_2,\ldots ,n_k$ such that every vertex in each part is adjacent to all the vertices of all other parts of $K_{n_1,n_2,\ldots ,n_k}$. A vertex $v$ of $\Gamma$ is said to be a \emph{dominating vertex} if $v$ is adjacent to all the other vertices of $\Gamma$.  The \emph{degree} $deg(v)$ of a vertex $v$ in a graph $\Gamma$, is the number of edges incident to $v$.  A \emph{walk} $\lambda$ in $\Gamma$ from the vertex $u$ to the vertex $w$ is a sequence of vertices $u = v_1, v_2, \ldots , v_m = w (m > 1)$ such that $v_i \sim v_{i+1}$ for every $i \in \{1, 2, \ldots , m-1\}$. If no edge is repeated in $\lambda$, then it is called a \emph{trail} in $\Gamma$. A trail whose initial and end vertices are identical is called a \emph{closed
trail}. A walk is said to be a \emph{path} if no vertex is repeated. A graph $\Gamma$ is \emph{connected}  if each pair of vertices has a path in $\Gamma$. Otherwise, $\Gamma$ is \emph{disconnected}. A graph $\Gamma$ is \emph{Eulerian} if $\Gamma$ is both connected and has a closed trail (walk with no repeated edge) containing all the edges of a graph. The \emph{distance} between $u, v \in V(\Gamma)$, denoted by $d(u, v)$,  is the number of edges in a shortest path connecting them. A \emph{clique} of a graph $\Gamma$ is a complete subgraph of $\Gamma$ and the number of vertices in a clique of maximum size is called the \emph{clique number} of $\Gamma$ and it is denoted by $\omega (\Gamma)$. The \emph{chromatic number} $\chi(\Gamma)$ of a graph $\Gamma$ is the smallest positive integer $k$ such that the vertices of $\Gamma$ can be colored in $k$ colors so that no two adjacent vertices share the same color. An \emph{independent set} of a graph $\Gamma$ is a subset of $V (\Gamma)$ such that no two vertices in the subset are adjacent in $\Gamma$. The \emph{independence number} of $\Gamma$ is the maximum size of an independent set, it is denoted by $\alpha(\Gamma)$. The graph $\Gamma$ is \emph{weakly perfect} if $\omega(\Gamma)=\chi(\Gamma)$.
  The \emph{diameter} of $\Gamma$ is the maximum distance between the pair of vertices in $\Gamma$. A graph $\Gamma$ is \emph{planar} if it can be drawn on a plane without edge crossing. A planar graph is said to be \emph{outerplanar} if it can be drawn in the plane such that its all vertices lie on the outer face.  A graph is said to be \emph{embeddable} on a topological surface if it can be drawn on the surface without edge crossing. The \emph{orientable genus} or \emph{genus} of a graph $\Gamma$, denoted by $\gamma (\Gamma)$, is the smallest non-negative integer $n$ such that $\Gamma$ can be embedded on the sphere with $n$ handles. Note that the graphs having genus $0$ are planar graphs and graphs having genus $1$ are \emph{toroidal} graphs. Let $\mathbb{N}_k$ denotes the non-orientable surface formed by connected sum of $k$ projective planes, that is, $\mathbb{N}_k$ is a non-orientable surface with $k$ crosscap. The crosscap of a graph $\Gamma$, denoted by $\overline{\gamma}(\Gamma)$, is the minimum non-negative integer $k$ such that $\Gamma$ can be embedded in $\mathbb{N}_k$. For instance, a graph $\Gamma$ is planar if $\overline{\gamma}(\Gamma)=0$ and the $\Gamma$ is \emph{projective-planar} if $\overline{\gamma}(\Gamma)=1$.
  The following results are used in the subsequent sections.
    \begin{theorem}{\cite{b.westgraph}}{\label{bipartitecondition}}
A  graph $\Gamma$ is bipartite if and only if it has no odd cycle.
\end{theorem}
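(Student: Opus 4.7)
The plan is to prove the biconditional by handling the two directions separately, with the nontrivial work lying in the converse.

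For the forward direction, suppose $\Gamma$ is bipartite with parts $A$ and $B$. I would take an arbitrary cycle $v_1 \sim v_2 \sim \cdots \sim v_k \sim v_1$ and observe that consecutive vertices alternate between $A$ and $B$, since endpoints of an edge must lie in different parts. Tracking parity as we traverse the cycle, $v_i$ and $v_j$ lie in the same part if and only if $i \equiv j \pmod 2$. Closing the cycle requires $v_1$ and $v_k$ to be in different parts, which forces $k$ to be even. Hence no odd cycle exists.

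For the converse, assume $\Gamma$ contains no odd cycle; I want to exhibit a bipartition. It suffices to handle each connected component separately, so I may assume $\Gamma$ is connected. Fix a vertex $u \in V(\Gamma)$ and define
\[
A = \{ v \in V(\Gamma) : d(u,v) \text{ is even}\}, \qquad B = \{ v \in V(\Gamma) : d(u,v) \text{ is odd}\}.
\]
Since $\Gamma$ is connected, $A \cup B = V(\Gamma)$, and clearly $A \cap B = \emptyset$. The task is to show that no edge has both endpoints in $A$, nor both in $B$.

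Suppose for contradiction that $x \sim y$ with $x, y$ in the same part, so $d(u,x) \equiv d(u,y) \pmod 2$. Taking shortest paths $P_x$ from $u$ to $x$ and $P_y$ from $u$ to $y$, and appending the edge $xy$, produces a closed walk of odd length based at $u$. The key remaining step, and the main subtlety I anticipate, is to extract an odd cycle from this odd closed walk. I would do this by induction on the walk's length: if the walk has no repeated internal vertex (beyond the base point), it is already an odd cycle; otherwise, splitting at a repeated vertex yields two strictly shorter closed walks whose lengths sum to the original odd length, so at least one of them is odd, and the inductive hypothesis applies. The resulting odd cycle contradicts the hypothesis, so $(A,B)$ is a valid bipartition and $\Gamma$ is bipartite.
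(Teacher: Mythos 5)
Your proposal is correct, and it is essentially the canonical argument: the paper does not prove this statement itself but cites it from West's textbook, where the proof is exactly what you describe --- parity alternation along a cycle for the forward direction, and for the converse the bipartition of each component into distance classes from a fixed root, with the key lemma (proved by your splitting induction) that every closed walk of odd length contains an odd cycle. You correctly identified that extracting the odd cycle from the odd closed walk is the only subtle step, and your induction handles it properly.
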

  \begin{theorem}{\cite{b.westgraph}}{\label{outerplanarcondition}}
A graph $\Gamma$ is outerplanar if and only if it does not contain a subdivision of $K_4$ or $K_{2,3}$.
\end{theorem}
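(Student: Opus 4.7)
The plan is to deduce this from Kuratowski's theorem via the classical reduction that a graph $\Gamma$ is outerplanar if and only if the graph $\Gamma^+$, obtained by adjoining one new vertex $v$ adjacent to every vertex of $\Gamma$, is planar. The forward direction of this reduction is immediate: given a planar embedding of $\Gamma^+$, deleting $v$ leaves all vertices of $\Gamma$ on the boundary of the face that formerly contained $v$. The reverse direction is equally easy: place $v$ in the outer face of an outerplanar embedding of $\Gamma$ and draw straight-line segments to each vertex.

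With this reduction in hand, the forward direction of the theorem is quick. If $\Gamma$ is outerplanar and yet contains a subdivision of $K_4$ (respectively $K_{2,3}$), then taking $v$ as a fifth branch vertex joined directly to the four branch vertices of the $K_4$ subdivision (respectively enlarging the part of size $2$ of the $K_{2,3}$ subdivision to size $3$ by adjoining $v$) produces in $\Gamma^+$ a subdivision of $K_5$ (respectively $K_{3,3}$), contradicting the planarity of $\Gamma^+$ via Kuratowski's theorem.

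For the converse I would argue by contrapositive. Assume $\Gamma$ is not outerplanar, so $\Gamma^+$ is not planar and by Kuratowski contains a subdivision $H$ of $K_5$ or $K_{3,3}$. I would then carry out a case analysis on the role of $v$ in $H$. If $v \notin V(H)$, then $H \subseteq \Gamma$ already supplies a subdivision of $K_5 \supset K_4$ or of $K_{3,3} \supset K_{2,3}$. If $v$ is an internal vertex of a single subdivided edge of $H$, suppressing $v$ yields such a subdivision lying inside $\Gamma$. Finally, if $v$ is a branch vertex of $H$, deleting $v$ leaves the remaining branch vertices pairwise connected by the surviving internally disjoint paths, producing a subdivision of $K_4$ (from a $K_5$) or of $K_{2,3}$ (from a $K_{3,3}$) in $\Gamma$. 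The main obstacle is the bookkeeping in this converse direction: one must ensure that the paths remaining after deleting or suppressing $v$ stay internally disjoint and wholly inside $\Gamma$, and rule out degenerate possibilities such as $v$ lying internally on several subdivided edges at once; the latter however a simple degree count inside $H$ forces to coincide with the branch-vertex case already handled.
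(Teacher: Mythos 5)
The paper offers no proof of this statement at all: it is the classical Chartrand--Harary characterization of outerplanar graphs, quoted verbatim with a citation to West's textbook. Your route --- reducing outerplanarity of $\Gamma$ to planarity of the cone $\Gamma^{+}$ (add a vertex $v$ joined to all of $V(\Gamma)$) and then invoking Kuratowski --- is the standard textbook proof of that result, and the overall architecture (both directions of the reduction, lifting a $K_4$ or $K_{2,3}$ subdivision to a $K_5$ or $K_{3,3}$ subdivision by using $v$ as an extra branch vertex, and the case analysis on the role of $v$ in a Kuratowski subgraph of $\Gamma^{+}$) is sound.

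There is, however, one step that fails as written: in the case where $v$ is an internal vertex of a subdivided edge of $H$, you claim that suppressing $v$ "yields such a subdivision lying inside $\Gamma$." It does not: suppressing a degree-$2$ vertex $v$ with neighbours $u,w$ replaces the path $u\,v\,w$ by the edge $uw$, and this edge need not belong to $\Gamma$ (nor even to $\Gamma^{+}$), so the suppressed graph is a subdivision of $K_5$ or $K_{3,3}$ in the abstract but not a subgraph of $\Gamma$; and deleting $v$ genuinely destroys the connection between the two branch vertices of that path, with no guarantee it can be rerouted inside $\Gamma$. The correct repair is easy and standard: delete $v$ instead, observe that $H-v\subseteq\Gamma$ contains a subdivision of $K_5$ minus one edge (resp.\ $K_{3,3}$ minus one edge), and note that $K_5$ minus an edge contains $K_4$ as a subgraph while $K_{3,3}$ minus an edge contains $K_{2,3}$; equivalently, after suppressing $v$, discard a branch vertex incident to the affected path so that the surviving subdivision avoids the fictitious edge $uw$ and hence lies in $\Gamma$. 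With that emendation (and the routine remark that, after deleting $v$ from a planar embedding of $\Gamma^{+}$, one re-embeds so that the face which contained $v$ becomes the outer face), your argument is complete.
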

\begin{theorem}{\cite{b.westgraph}}{\label{planarcondition1}}
A graph $\Gamma$ is planar if and only if it does not contain a subdivision of $K_5$ or $K_{3,3}$.
\end{theorem}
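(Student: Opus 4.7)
This is Kuratowski's theorem, a classical result in topological graph theory, so I will sketch a standard proof outline rather than anything bespoke to the paper.

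The forward direction is the easy half. The plan is to first verify that $K_5$ and $K_{3,3}$ themselves are non-planar using Euler's formula $v - e + f = 2$ for connected planar graphs, together with the face-counting bounds $e \le 3v - 6$ (general simple case, which rules out $K_5$ since $10 > 9$) and $e \le 2v - 4$ (triangle-free case, which rules out $K_{3,3}$ since $9 > 8$). Then I would observe that subdividing or unsubdividing an edge cannot change planarity, since one can extend any planar drawing by placing new degree-two vertices along the existing edge. Consequently, any graph that contains a subdivision of $K_5$ or $K_{3,3}$ as a subgraph is itself non-planar.

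The converse is the substantive direction. I would argue by taking a counterexample $G$ that is edge-minimal: non-planar, but containing no subdivision of $K_5$ or $K_{3,3}$. The plan is to force $G$ to be 3-connected and then reach a contradiction by contracting and re-expanding an edge. First, 2-connectivity is immediate: if $G$ had a cut vertex, one of its blocks would already be a smaller counterexample. For 3-connectivity, suppose $\{u,v\}$ separates $G$ into parts $H_1, H_2$; by minimality each $H_i \cup \{uv\}$ is planar (or already contains a forbidden subdivision, which would lift to $G$), and the planar embeddings can be glued along the face containing $uv$ to planarly embed $G$, contradicting non-planarity. Next, invoke Tutte's theorem that every 3-connected graph on at least five vertices has an edge $e=xy$ whose contraction $G/e$ is still 3-connected. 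By minimality $G/e$ is planar; fix a planar embedding, and consider re-expanding the contracted vertex $z$ back into $x$ and $y$. The neighbors of $z$ lie on the boundary of a face in the embedding restricted to $G/e - z$, and one must split these neighbors between $x$ and $y$ consistently with a planar embedding of $G$. The case analysis of when this re-expansion is obstructed is the heart of the proof: each obstruction produces either three neighbors of $x$ and three of $y$ arranged so as to yield a $K_{3,3}$-subdivision, or four common neighbors arranged around the cycle so as to yield a $K_5$-subdivision.

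The main obstacle is clearly the last step: turning topological obstructions to re-embedding into a concrete subdivision of $K_5$ or $K_{3,3}$. One has to enumerate how the neighbors of $x$ and $y$ can be interlaced around the face boundary in the embedding of $G/e - z$, and in each configuration exhibit the required disjoint paths using 3-connectivity of $G$ to guarantee enough internally disjoint routes. The argument is not deep conceptually, but the bookkeeping is delicate, and a cleaner modern treatment (for instance via Thomassen's inductive proof) would be my preferred route if space permitted.
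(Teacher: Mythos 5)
The paper offers no proof of this statement: it is Kuratowski's theorem, quoted from West's textbook as a background tool, so there is no argument of the authors' to compare yours against. Your outline follows the standard contraction-based route (minimal counterexample, reduction to the $3$-connected case, Tutte's lemma on contractible edges, re-expansion of the contracted vertex), which is a perfectly viable and classical way to prove the result; the forward direction via Euler's formula and the invariance of planarity under subdivision is also correct.

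Two points in your sketch are genuinely load-bearing and are not discharged. First, the step ``by minimality $G/e$ is planar'' tacitly uses the lemma that if $G/e$ contains a subdivision of $K_5$ or $K_{3,3}$, then so does $G$; this does not follow from the remark that subdivisions preserve non-planarity, because contraction is not the inverse of subdivision. One must check the case where the contracted vertex is a branch vertex with both ends $x,y$ of $e$ having at least three branch neighbours, in which a $K_5$-subdivision of $G/e$ lifts only to a $K_{3,3}$-subdivision of $G$ --- harmless for the theorem, but it must be said. Second, the re-expansion case analysis, which you correctly identify as the heart of the converse, is left entirely unexecuted; as written the proposal is a strategy, not a proof. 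Both gaps are filled in standard references (e.g.\ Thomassen's proof, or the treatment in West or Diestel), so the approach is sound, but the argument as submitted is incomplete at exactly the places where the real work lies.
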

\begin{theorem}{\cite{b.White1973}}{\label{planarcondition}} The genus and cross-cap of the complete graphs $K_n$ and $K_{m, n}$ are given below:
\begin{itemize}
\item[(i)] $\gamma(K_n)= \left\lceil{\frac{(n-3)(n-4)}{12}}\right\rceil $, $n\geq 3$.\\
\item[(ii)]$\gamma(K_{m,n})=\left\lceil \frac{(m-2)(n-2)}{4}\right\rceil $, $m,n\geq 2$.\\
\item[(iii)] $\overline{\gamma}(K_n)= \left\lceil{\frac{(n-3)(n-4)}{6}}\right\rceil $, $n\geq 3$, $n\neq 7$; $\overline{\gamma} (K_n)=3$ if $n=7$.\\
\item[(iv)] $\overline{\gamma}(K_{m,n})=\left\lceil \frac{(m-2)(n-2)}{2}\right\rceil $, $m,n\geq 2$.\\
\end{itemize}
\end{theorem}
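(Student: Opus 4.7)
The plan is to establish each formula by sandwiching between matching lower and upper bounds: the lower bound drops out of Euler's formula for $2$-cell embeddings, while the upper bound requires explicit triangular (or quadrilateral) embeddings, which is the classical content of the Ringel--Youngs theorem.

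For the lower bounds, suppose $K_n$ is $2$-cell embedded in an orientable surface of genus $g$, so that $V-E+F = 2-2g$. Since every face in a simple embedding is bounded by at least three edges and each edge bounds exactly two faces, $3F \le 2E$, and substitution gives
\[
g \;\ge\; 1 - \frac{V}{2} + \frac{E}{6}.
\]
With $V=n$ and $E=\binom{n}{2}$, taking ceilings yields $\lceil (n-3)(n-4)/12 \rceil$. For $K_{m,n}$, bipartiteness forces every face boundary to have length at least four, so $2F \le E$, and the analogous calculation with $V=m+n$, $E=mn$ produces $\lceil (m-2)(n-2)/4 \rceil$. The non-orientable bounds follow by replacing $2-2g$ with $2-k$ in Euler's formula. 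For $K_7$, however, the arithmetic bound is only $2$, and one must rule out a triangular embedding in $\mathbb{N}_{2}$ by a separate combinatorial argument on rotation systems, which forces $\overline{\gamma}(K_7)=3$.

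The harder direction is to exhibit embeddings meeting these bounds. Following Ringel and Youngs, I would split into the twelve residue classes of $n$ modulo $12$ and, for each class, construct a triangular rotation system on $K_n$ (equivalently, a current graph) realising the claimed genus. A parallel development using quadrilateral embeddings handles $K_{m,n}$, and the non-orientable cases proceed by analogous constructions together with a bespoke treatment of $K_7$ in $\mathbb{N}_{3}$.

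The principal obstacle is the upper-bound construction: this is essentially the solution of the Heawood map-colour conjecture, and the case-by-case current-graph constructions occupy a substantial monograph. For the present purposes I would simply invoke the Ringel--Youngs theorem and Ringel's theorems for $K_{m,n}$ as black boxes, which is precisely what the authors do by citing \cite{b.White1973}.
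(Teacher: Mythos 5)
Your proposal is correct and, in the end, does exactly what the paper does: the paper gives no proof of this classical result but simply cites it from the literature (White's book, resting on the Ringel--Youngs and Ringel theorems plus the exceptional case $\overline{\gamma}(K_7)=3$), and your Euler-formula lower bounds together with invoking the current-graph constructions as black boxes is the standard treatment of precisely those cited results.
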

Let $G$ be a group. The order of an element $x$ in $G$ is denoted by $o(x)$. For a positive integer $n$, $\phi(n)$ denotes the Euler's totient function of $n$. For $n \geq 3$, the \emph{dihedral group} $D_{2n}$ is a group of order $2n$ is defined in terms of generators and
relations as $D_{2n} = \langle x, y  :  x^{n} = y^2 = e,  xy = yx^{-1} \rangle$. For $n \geq 2$, the \emph{dicyclic group} $Q_{4n}$ is a group of order $4n$ is defined in terms of generators and
relations as $Q_{4n} = \langle a, b  :  a^{2n}  = e, a^n= b^2, ab = ba^{-1} \rangle.$ A cyclic subgroup of a group $G$ is called a \emph{maximal cyclic subgroup} if it is not properly contained in any cyclic subgroup of $G$ other than itself. If $G$ is a cyclic group, then $G$ is  the only maximal cyclic subgroup of $G$. We denote $\m$ by the set of all maximal cyclic subgroups of $G$. Also, we write $\g=\{x\in G : \langle x \rangle \in \m \}$.
\begin{remark}{\label{remark maximal}}
Let $G$ be a finite group. Then $G= \bigcup\limits_{M\in \m} M$ and the generators of a maximal cyclic subgroup does not belong to any other maximal cyclic subgroup of $G$. Consequently, if $|M_i|$ is a prime number then $M_i \cap M_j =\{e\}$ for distinct $M_i,  M_j \in \m$.
\end{remark}
The detail of the maximal cyclic subgroups of non-isomorphic groups of order up to $15$ is given in the Table \ref{table}. 
\begin{table}[ht]
\centering
\scalebox{.98}{
\begin{tabular}{ |l|c|c|c|l| } \hline
$O(G)$ & No. of Groups & Type of Groups & $|\m|$ & $|M_i|$ \\
\hline
\multirow{1}{4em}{1} & 1 & $\mathbb{Z}_1$ & 1 &  $|M_1|=1$\\ 
\hline
\multirow{1}{4em}{2} & 1 & $\mathbb{Z}_2$ & 1 & $|M_1|=2$\\ 
\hline
\multirow{1}{4em}{3} & 1 & $\mathbb{Z}_3$ & 1 & $|M_1|=3$\\
\hline
\multirow{2}{4em}{4} & 2 & $\mathbb{Z}_4$ & 1 & $|M_1|=4$\\
& & $\mathbb{Z}_2\times \mathbb{Z}_2$ & 3 & $|M_1|=|M_2|=|M_3|=2$\\
\hline
\multirow{1}{4em}{5} & 1 & $\mathbb{Z}_5$ & 1 & $|M_1|=5$\\
\hline
\multirow{2}{4em}{6} & 2 & $\mathbb{Z}_6$ & 1 & $|M_1|=6$\\
& & $S_3$ & 4 & $|M_1|=3, |M_2|=|M_3|=|M_4|=2$\\
\hline
\multirow{1}{4em}{7} & 1 & $\mathbb{Z}_7$ & 1 & $|M_1|=7$\\
\hline
\multirow{5}{4em}{8} & 5 & $\mathbb{Z}_8$ & 1 & $|M_1|=8$\\
& & $\mathbb{Z}_2\times \mathbb{Z}_4$ & 4 & $|M_1|= |M_2|=4$, $|M_3|=|M_4|=2$\\
& & $\mathbb{Z}_2\times \mathbb{Z}_2\times \mathbb{Z}_2$ & 7 & $|M_i|= 2$ for $i\in \{1,2,3,4,5,6,7\}$\\
& & $D_8$ & 5 & $|M_1|= 4, \ |M_i|=2$ for $i\in \{2,3,4,5\}$\\
& & $Q_8$ & 3 & $|M_1|= |M_2|=|M_3|=4$\\
\hline
\multirow{2}{4em}{9} & 2 & $\mathbb{Z}_9$ & 1 & $|M_1|=9$\\
& & $\mathbb{Z}_3\times \mathbb{Z}_3$ & 4 & $|M_1|=|M_2|=|M_3|=|M_4|=3$\\
\hline
\multirow{2}{4em}{10} & 2 & $\mathbb{Z}_{10}$ & 1 & $|M_1|=10$\\
& & $D_{10}$ & 6 & $|M_1|=5, \ |M_i|=2$ for $i\in \{2,3,4,5,6\}$\\
\hline
\multirow{1}{4em}{11} & 1 & $\mathbb{Z}_{11}$ & 1 & $|M_1|=11$\\
\hline
\multirow{5}{4em}{12} & 5 & $\mathbb{Z}_{12}$ & 1 & $|M_1|=12$\\
& & $\mathbb{Z}_2\times \mathbb{Z}_6$ & 3 & $|M_1|= |M_2|=|M_3|=6$\\
& & $A_4$ & 7 & $|M_i|= 3$, $|M_j|=2$ for $i\in \{1,2,3,4\}$,  $j\in \{5,6,7\}$\\
& & $D_{12}$ & 7 & $|M_1|= 6, \ |M_i|=2$ for $i\in \{2,3,4,5,6,7\}$\\
& & $Q_6$ & 4 & $|M_1|=6, \ |M_2|=|M_3|=|M_4|=4$\\
\hline
\multirow{1}{4em}{13} & 1 & $\mathbb{Z}_{13}$ & 1 & $|M_1|=13$\\
\hline
\multirow{2}{4em}{14} & 2 & $\mathbb{Z}_{14}$ & 1 & $|M_1|=14$\\
& & $D_{14}$ & 8 & $|M_1|=7, \ |M_i|=2$ for $i\in \{2,3,4,5,6,7,8\}$\\
\hline
\multirow{1}{4em}{15} & 1 & $\mathbb{Z}_{15}$ & 1 & $|M_1|=15$\\
\hline
\end{tabular}
}
\vspace{.3cm}
\hspace{-1cm} \caption{The maximal cyclic subgroups of non-isomorphic groups of order upto 15.}
    \label{table}
\end{table}

The following results are used in the subsequent sections.
\begin{theorem}{\rm \cite[Theorem 2.4]{a.Bera2017}}{\label{complete}}
The enhanced power graph $\a$   of the group $G$  is complete if and only if $G$ is cyclic.
\end{theorem}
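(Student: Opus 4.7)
The forward direction is immediate: if $G$ is cyclic, say $G=\langle g\rangle$, then every pair of elements lies in $\langle g\rangle$, so by definition every two distinct vertices of $\mathcal{P}_E(G)$ are adjacent.

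For the converse I would argue by contrapositive using the machinery of maximal cyclic subgroups already introduced in the preliminaries. Assume $\mathcal{P}_E(G)$ is complete and suppose, for a contradiction, that $G$ is not cyclic. By Remark \ref{remark maximal}, $G=\bigcup_{M\in\mathcal{M}(G)} M$, so non-cyclicity forces $|\mathcal{M}(G)|\geq 2$. Pick two distinct maximal cyclic subgroups $M_1=\langle a\rangle$ and $M_2=\langle b\rangle$, and take $a,b$ to be generators of $M_1,M_2$ respectively. Because $\mathcal{P}_E(G)$ is complete, there exists $z\in G$ with $a,b\in\langle z\rangle$.

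The key step is now to observe that $\langle z\rangle$ is itself a cyclic subgroup, so it is contained in some maximal cyclic subgroup $M\in\mathcal{M}(G)$. Then $a\in M$ forces $M_1=\langle a\rangle\subseteq M$, and maximality of $M_1$ yields $M_1=M$; the same reasoning gives $M_2=M$, contradicting $M_1\neq M_2$. Hence $|\mathcal{M}(G)|=1$, i.e.\ $G$ itself is its unique maximal cyclic subgroup, so $G$ is cyclic.

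I do not anticipate a serious obstacle here; the only subtle point is the appeal to the fact that each cyclic subgroup of a finite group sits inside some maximal cyclic subgroup (which is immediate by finite ascending chains). The proof is short and uses only Remark \ref{remark maximal} together with the definition of adjacency in $\mathcal{P}_E(G)$.
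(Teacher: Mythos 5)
Your proof is correct, but note that the paper itself offers no argument to compare against: Theorem \ref{complete} is quoted verbatim from \cite[Theorem 2.4]{a.Bera2017} as a known result, so any valid proof here is necessarily ``extra'' content rather than a reconstruction of an in-paper argument. Your argument is sound as written: the forward direction is immediate, and in the converse the completeness of $\mathcal{P}_E(G)$ applied to generators $a$ of $M_1$ and $b$ of $M_2$ (which are distinct elements since $M_1\neq M_2$) gives $a,b\in\langle z\rangle$, forcing $M_1=M_2$ by maximality. Two small remarks. First, your detour through a maximal cyclic subgroup $M\supseteq\langle z\rangle$ is unnecessary: maximality of $M_1$ and $M_2$ applied directly to the inclusions $M_1\subseteq\langle z\rangle$ and $M_2\subseteq\langle z\rangle$ already yields $M_1=\langle z\rangle=M_2$. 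Second, your proof (correctly) uses finiteness, both for $G=\bigcup_{M\in\mathcal{M}(G)}M$ and for the existence of maximal cyclic subgroups; this is the standing hypothesis of the paper and is genuinely needed, since for example the Pr\"ufer group $\mathbb{Z}(p^{\infty})$ has a complete enhanced power graph without being cyclic.
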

\begin{lemma}{\label{2maximal}}
If $G$ is a finite group then $| \m |\neq 2$.
\end{lemma}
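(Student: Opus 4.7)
The plan is to argue by contradiction using two standard facts: the covering remark stated just above (that $G=\bigcup_{M\in\mathcal{M}(G)}M$), and the classical observation that a group is never the union of two proper subgroups.

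Suppose for contradiction that $\mathcal{M}(G)=\{M_1,M_2\}$. First I would dispose of the case where one of the $M_i$ equals $G$: if $M_1=G$, then $G$ is cyclic, and for a cyclic group $G$ itself is the unique maximal cyclic subgroup, forcing $|\mathcal{M}(G)|=1$, a contradiction. The symmetric case is identical. So both $M_1$ and $M_2$ must be proper subgroups of $G$.

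Now by Remark \ref{remark maximal} we have $G=M_1\cup M_2$. The key step is to invoke the elementary fact that no group is a union of two proper subgroups: pick $x\in M_1\setminus M_2$ and $y\in M_2\setminus M_1$ (both exist since each $M_i$ is proper and neither is contained in the other, as they are distinct maximal cyclic subgroups). Consider $xy\in G=M_1\cup M_2$. If $xy\in M_1$, then $y=x^{-1}(xy)\in M_1$, contradiction; if $xy\in M_2$, then $x=(xy)y^{-1}\in M_2$, again a contradiction. This contradicts $G=M_1\cup M_2$, completing the argument.

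There is no real obstacle here; the proof is essentially a one-line application of the \textquotedblleft no group is a union of two proper subgroups\textquotedblright\ lemma together with the covering property of $\mathcal{M}(G)$. The only minor subtlety worth spelling out is ruling out $M_i=G$, which is immediate from the definition of a maximal cyclic subgroup in a cyclic group.
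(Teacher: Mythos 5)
Your proof is correct, but it follows a different route from the paper. The paper's argument is a counting one: since the two maximal cyclic subgroups cover $G$ and share the identity, $|M_1|+|M_2|\geq o(G)+1$, while Lagrange's theorem gives $|M_i|\leq o(G)/2$ for proper subgroups, so $|M_1|+|M_2|\leq o(G)$, a contradiction. You instead invoke the classical fact that no group is the union of two proper subgroups, via the standard trick of taking $x\in M_1\setminus M_2$, $y\in M_2\setminus M_1$ and locating $xy$; combined with the covering property of Remark \ref{remark maximal} this gives the same conclusion. Your handling of the side cases is also sound: $M_i=G$ would force $G$ cyclic and hence $|\m|=1$, and distinct maximal cyclic subgroups cannot contain one another, so the sets $M_1\setminus M_2$ and $M_2\setminus M_1$ are indeed nonempty. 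The trade-off: your argument is purely algebraic, needs no Lagrange or counting, and would apply verbatim to any (not necessarily finite) group in which every element lies in a maximal cyclic subgroup; the paper's counting style, on the other hand, is the template reused throughout Section 3 and 4 (bounding $o(G)$ via Remark \ref{remark maximal}), so its proof of this lemma doubles as a warm-up for those later estimates.
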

\begin{proof}
On contrary, assume that the group $G$ has two maximal cyclic subgroups $M_1$ and $M_2$. Then every element of $G$ belongs to at least one of the maximal cyclic subgroup of $G$ and $e\in M_1\cap M_2$. It follows that 
$|M_1|+|M_2|\geq o(G)+1.$
Since $M_1$ and $M_2$ are proper subgroups of a finite group $G$, by Lagrange's theorem, we have $$|M_1|\leq \frac{o(G)}{2} \ \text{and} \  |M_2|\leq \frac{o(G)}{2}.$$
Consequently, we get $o(G)+1 \leq |M_1|+|M_2| \leq o(G)$, which is not possible. Hence, $| \m |\neq 2$.
\end{proof}


\section{Graph invariants and the structure of $\c$} 
In this section, first we give an affirmative answer to the question posed by Cameron \cite[Question 20]{a.cameron2021graphs} (cf. Theorem \ref{1connected}). We classify all finite groups $G$ such that the graph $\c$ is bipartite. We obtain the girth of $\c$ and also we prove that the graph $\c$ is weakly perfect. Further, we study the subgraph $\d$ of $\c$ induced by all the non-isolated vertices in $\c$. Then we classify the groups $G$ such that the graph $\d$ is dominatable and Eulerian, respectively. Finally, we classify all finite groups $G$ such that the graph is $\d$ is unicyclic and pentacyclic. We also proved that the graph $\d$ can not be bicyclic, tricyclic and tetracyclic.

\begin{lemma}{\label{intersectionlemma}}
Let $G$ be a finite group. Then $x$ is an isolated vertex of $\c$ if and only if $x$ lies in every maximal cyclic subgroup of $G$.
\end{lemma}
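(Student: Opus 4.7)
The plan is to unfold the definition of the complement and reformulate ``$x$ is isolated'' as: every other vertex $y\in G$ shares a cyclic subgroup with $x$. From there the equivalence with ``$x$ lies in every maximal cyclic subgroup'' falls out by choosing the right test vertex $y$, namely a generator of a given maximal cyclic subgroup.

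First I would translate adjacency. By definition, $x\sim y$ in $\a$ exactly when $x,y\in\langle z\rangle$ for some $z\in G$; hence in $\c$ the vertex $x$ is isolated if and only if for every $y\in G\setminus\{x\}$ there exists $z\in G$ with $x,y\in\langle z\rangle$. Note that any cyclic subgroup is contained in some maximal cyclic subgroup, so this is equivalent to the condition that for every $y\in G$ there is some $M\in\m$ with $x,y\in M$.

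For the easy direction ($\Leftarrow$), if $x\in M$ for every $M\in\m$, then given any $y\in G$, use Remark~\ref{remark maximal} to pick $M\in\m$ with $y\in M$; since $x\in M$ as well, $x$ and $y$ are adjacent in $\a$, so $x$ has no neighbor in $\c$.

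For the non-trivial direction ($\Rightarrow$), I would argue by contrapositive: suppose some maximal cyclic subgroup $M=\langle g\rangle$ does not contain $x$, and I would like to show $x$ is not isolated in $\c$. The natural candidate non-neighbor is $g$ itself. If $x$ were adjacent to $g$ in $\a$, then $x,g\in\langle z\rangle$ for some $z$; this cyclic subgroup $\langle z\rangle$ lies inside some maximal cyclic subgroup $M'\in\m$. Since $g\in M'$ and $g$ generates $M$, we have $M=\langle g\rangle\subseteq M'$, forcing $M=M'$ by maximality of $M$; but then $x\in M$, contradicting our assumption. Hence $x\not\sim g$ in $\a$, so $x\sim g$ in $\c$, showing $x$ is not isolated.

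The only subtle point, and arguably the ``main obstacle,'' is remembering that $x\sim y$ in $\a$ requires only a common cyclic overgroup, not necessarily a maximal one; this is handled by observing that every cyclic subgroup is contained in a maximal cyclic subgroup, after which the proof is essentially a one-line maximality argument.
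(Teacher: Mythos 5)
Your proof is correct and follows essentially the same route as the paper: isolated in $\c$ means dominating in $\a$, and the equivalence with lying in every maximal cyclic subgroup is settled by testing adjacency against a generator of each maximal cyclic subgroup. The paper states this converse step in one line ("Consequently, $x$ belongs to every maximal cyclic subgroup"), and your maximality argument with the test vertex $g$ is exactly the detail being suppressed there.
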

\begin{proof}
First suppose that $x$ lies in every maximal cyclic subgroup of $G$. This implies that  $x$ is adjacent to every element of $G$ in $\a$ and so $x$ is an isolated vertex in $\c$. Conversely, let $x$ be an isolated vertex in $\c$. Then $x$ is a dominatable vertex in $\a$. Consequently, $x$ belongs to every maximal cyclic subgroup of $G$.
\end{proof}
\begin{theorem}{\label{1connected}}
Let $G$ be a finite non-cyclic group. Then $\c$ has just one connected component, apart from isolated vertices.
\end{theorem}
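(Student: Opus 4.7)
The plan is to use Lemma \ref{intersectionlemma} to identify the non-isolated vertices of $\c$ as exactly those elements of $G$ that fail to belong to some maximal cyclic subgroup. Since $G$ is non-cyclic, $G$ cannot equal a single maximal cyclic subgroup, so $|\m| \geq 2$ (in fact $|\m| \geq 3$ by Lemma \ref{2maximal}). Given two non-isolated vertices $x, y$, I want to produce a walk between them in $\c$ whose internal vertices are themselves non-isolated.

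The central observation is that if $g$ is a generator of a maximal cyclic subgroup $M$, then by Remark \ref{remark maximal}, $g$ belongs to no maximal cyclic subgroup other than $M$. Consequently, for any element $u \in G$ with $u \notin M$, the elements $u$ and $g$ share no maximal cyclic subgroup, and hence no cyclic subgroup at all, so $u \sim g$ in $\c$. Moreover, because $|\m| \geq 2$, the generator $g$ itself lies outside at least one maximal cyclic subgroup, so $g$ is non-isolated. Thus generators of maximal cyclic subgroups will serve as the ``hubs'' of the construction.

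With this in hand, I would argue as follows. For the non-isolated vertex $x$, pick $M_1 \in \m$ with $x \notin M_1$ and a generator $g_1$ of $M_1$; then $x \sim g_1$ in $\c$ and $g_1$ is non-isolated. Do the same for $y$ to obtain a generator $g_2$ of some $M_2 \in \m$ with $y \notin M_2$. If $M_1 \neq M_2$, the observation above gives $g_1 \sim g_2$ in $\c$, yielding the walk $x \sim g_1 \sim g_2 \sim y$. If instead $M_1 = M_2$, choose any $M' \in \m \setminus \{M_1\}$ (available because $|\m| \geq 2$) together with a generator $g'$ of $M'$; then both $g_1 \sim g'$ and $g' \sim g_2$ in $\c$, giving the walk $x \sim g_1 \sim g' \sim g_2 \sim y$.

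The main obstacle I anticipate is ensuring that all the hubs $g_1, g_2, g'$ used as internal vertices are themselves non-isolated, which is exactly the step that forces the hypothesis that $G$ is non-cyclic. A secondary bookkeeping issue is handling degenerate coincidences among the chosen path vertices (e.g.\ if $y = g_1$, in which case $x$ and $y$ are already directly adjacent in $\c$, or if $g_1 = g_2$); such coincidences only shorten the walk and do not affect connectivity.
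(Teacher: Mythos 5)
Your proof is correct and rests on the same key facts as the paper's: Lemma \ref{intersectionlemma} for the isolated vertices, and the observation (via Remark \ref{remark maximal} and maximality) that a generator of a maximal cyclic subgroup $M$ is adjacent in $\c$ to every element of $G\setminus M$. The paper fixes one $M$ and shows every non-isolated vertex attaches to $(G\setminus M)\cup\widetilde{M}$, while you join an arbitrary pair of non-isolated vertices by a short walk through generator ``hubs''; this is only a presentational difference, so the approach is essentially the same.
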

\begin{proof}
Let $M$ be a maximal cyclic subgroup of $G$ and let $x\in \widetilde{M}$, where $\widetilde{M}$ is the set of generators of $M$. First note that $x$ is adjacent to every element of $G\setminus M$ in $\c$. If possible, assume that $x\nsim y$ in $\c$, for some $y\in G\setminus M$. Then $x$ is adjacent to $y$ in $\a$. Therefore, $x,y \in \langle z \rangle$ for some $z\in G$. This contradicts the maximality of  $M$. Thus, $(G\setminus M)\cup \widetilde{M}$ is a connected component of $\c$. Now for $z\in M\setminus \widetilde{M}$, if $z\in \cap M_i$ for every $i$, then by Lemma \ref{intersectionlemma}, $z$ is an isolated vertex. Now, if $z\notin M_j$ for some $M_j\in \m$, then $z$ is adjacent to every element of $\widetilde{M}_j \subseteq (G\setminus M)$ in $\c$. Thus, the result holds.
\end{proof}
\begin{theorem}{\label{bipartitegirth}}
Let $G$ be a finite group. Then the following hold:
\begin{itemize}
    \item[(i)] the graph $\c$ is bipartite if and only if $G$ is cyclic.
   \item[(ii)] the girth of $\c$ is either $3$ or $\infty$.
   \item[(iii)]  the graph $\c$ is weakly perfect.
\end{itemize}
\end{theorem}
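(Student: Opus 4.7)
The plan is to handle the three parts in order, using $|\mathcal{M}(G)|$ as the common parameter that ties (i), (ii) and (iii) together. The central observation I will exploit is that a generator of a maximal cyclic subgroup $M\in\m$ lies in \emph{no} other maximal cyclic subgroup (by Remark~\ref{remark maximal}), so any two such generators (from different maximal cyclic subgroups) are non-adjacent in $\a$, hence adjacent in $\c$.

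For (i), the cyclic direction is immediate: if $G$ is cyclic then $\a$ is complete by Theorem~\ref{complete}, so $\c$ is edgeless and thus bipartite. For the converse I would show that whenever $G$ is non-cyclic, $\c$ contains a triangle (so by Theorem~\ref{bipartitecondition} it cannot be bipartite). Since $G$ is non-cyclic we have $|\m|\geq 2$, and by Lemma~\ref{2maximal} in fact $|\m|\geq 3$. Pick distinct $M_1,M_2,M_3\in\m$ and a generator $x_i$ of $M_i$ for each $i$. If $x_i$ and $x_j$ were adjacent in $\a$ they would lie in a common cyclic subgroup, which sits inside some maximal cyclic subgroup $M_k$; since $x_i$ generates $M_i$, this forces $M_i=M_k=M_j$, a contradiction. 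So $\{x_1,x_2,x_3\}$ is a triangle in $\c$.

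Part (ii) then drops out for free: in the cyclic case $\c$ has no edges so the girth is $\infty$, and in the non-cyclic case the triangle from (i) gives girth $3$.

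For (iii) I would show that $\omega(\c)=\chi(\c)=|\m|$, treating the cyclic case separately (there $|\m|=1$ and $\c$ is edgeless, so $\omega=\chi=1$). Assume $G$ is non-cyclic. The lower bound $\omega(\c)\geq|\m|$ comes from the same argument as in (i): picking one generator from each $M\in\m$ yields a clique in $\c$ of size $|\m|$. For the upper bound on $\chi(\c)$, I would define a coloring by choosing, for each $x\in G$, some $M(x)\in\m$ with $x\in M(x)$ (possible since $G=\bigcup_{M\in\m}M$ by Remark~\ref{remark maximal}) and assigning $x$ the color $M(x)$. If $M(x)=M(y)$ then $x$ and $y$ lie in the common cyclic subgroup $M(x)$ and are therefore adjacent in $\a$, i.e.\ non-adjacent in $\c$; so this is a proper coloring with at most $|\m|$ colors. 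Combining with the general inequality $\omega(\c)\leq\chi(\c)$ gives $|\m|\leq\omega(\c)\leq\chi(\c)\leq|\m|$, proving weak perfectness.

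None of the steps look genuinely difficult given the preliminaries already in the paper; the one place to be a little careful is the upper bound $\chi(\c)\leq|\m|$, where one must verify that assigning each element to a containing maximal cyclic subgroup really does produce a proper coloring rather than merely a partition, but this is exactly what the definition of $\a$-adjacency via a common cyclic subgroup gives.
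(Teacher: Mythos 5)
Your proposal is correct and follows essentially the same route as the paper: the triangle (and, more generally, the clique) on generators of distinct maximal cyclic subgroups for (i) and (ii), and the bound $|\m|\le\omega(\c)\le\chi(\c)\le|\m|$ via the partition of $G$ by containing maximal cyclic subgroups for (iii). The only small divergence is that you obtain the lower bound $\omega(\c)\ge|\m|$ directly from that clique of generators, whereas the paper cites the known fact $\alpha(\mathcal{P}_E(G))=|\m|$ together with $\alpha(\Gamma)=\omega(\overline{\Gamma})$; your version is self-contained but rests on the same idea.
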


\begin{proof}
(i) Let $\c$ be a bipartite graph such that $|\m |\geq 3$. Suppose $M_1= \langle x \rangle ,\ M_2=\langle y \rangle$ and $M_3= \langle z \rangle$. Then we get a cycle $x\backsim y\backsim z\backsim x$ of odd length in $\c$; a contradiction (cf. Theorem \ref{bipartitecondition}).
Consequently, $|\m|\leq 2$. By Lemma \ref{2maximal}, we get $|\m|=1$. Thus, $G$ is a cyclic group. 
Conversely, assume that $G$ is a cyclic group of order $n$. By Theorem \ref{complete}, $\c$ is a null graph. Hence, $\c$ is a bipartite graph.\\
(ii) If $G$ is a cyclic group then $\c$ is a null graph and hence the girth of $\c$ is $\infty$. We may now suppose that $G$ is a non-cyclic group. By the proof of part (i), notice that $\c$ contains a cycle of length $3$. Thus, the girth of $\c$ is $3$.\\
(iii) By {\rm \cite[Theorem 3.3]{a.panda2021enhanced}}, $\alpha (\mathcal{P}_E(G))=|\m|$. For a graph $\Gamma$, we have $\alpha (\Gamma)=\omega (\overline{\Gamma})$. Thus, $\omega(\c)=|\m|$. Also, $\chi (\c)\geq \omega (\c)= |\m|$. Notice that if $G$ is a non-cyclic group then $\c$ is a $k-$partite graph, where $k=|\m|$. Consequently, $\chi (\c)\leq |\m|$. Thus the result holds. 
\end{proof}

Note that the subgraph $\d$ of $\c$ induced by all the non-isolated vertices in $\c$ is connected. In what follows, we study various graph invariants and  embeddings of $\d$ on various surfaces. 
\begin{theorem}
The graph $\d$ is dominatable if and only if $G$ has a maximal cyclic subgroup of order $2$.
\end{theorem}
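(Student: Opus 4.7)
The plan is to prove both implications directly. First note that $V(\d)$ consists, by Lemma \ref{intersectionlemma}, of those $g\in G$ which fail to lie in some maximal cyclic subgroup of $G$; in particular $e\notin V(\d)$. If $G$ is cyclic then $\c$ is a null graph (by Theorem \ref{complete}, $\a$ is complete), so $\d$ is empty and in particular not dominatable. So I may assume $G$ is non-cyclic, whence Lemma \ref{2maximal} gives $|\m|\geq 3$.

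For the ``if'' direction, suppose $M=\{e,x\}\in\m$ has order $2$. I will show that $x$ is a dominating vertex of $\d$. By Remark \ref{remark maximal}, the generator $x$ of $M$ does not lie in any other maximal cyclic subgroup; combined with $|\m|\geq 3$ this means $x$ is non-isolated, so $x\in V(\d)$. Now let $w\neq x$ be any other non-isolated vertex; then $w\neq e$ and $w\notin M$, that is $w\in G\setminus M$. As in the proof of Theorem \ref{1connected}, maximality of $M$ forbids any cyclic subgroup of $G$ from containing both $x$ and $w$, so $x\nsim w$ in $\a$ and hence $x\sim w$ in $\c$. Therefore $x$ dominates $\d$.

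For the ``only if'' direction, let $v$ be a dominating vertex of $\d$ and choose any $M\in\m$ with $v\in M$ (such an $M$ exists by Remark \ref{remark maximal}). I claim $|M|=2$. Suppose for contradiction that $|M|\geq 3$. Then $\phi(|M|)\geq 2$, so $M$ has at least two generators, and in particular there is a generator $g$ of $M$ with $g\neq v$. By Remark \ref{remark maximal}, $g$ lies in no maximal cyclic subgroup other than $M$, and since $|\m|\geq 3$, this makes $g$ non-isolated, so $g\in V(\d)$. But $v,g\in M=\langle g\rangle$ gives $v\sim g$ in $\a$, hence $v\nsim g$ in $\c$, contradicting the assumption that $v$ dominates $\d$. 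So $|M|=2$ as required.

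The main obstacle is a minor one: in the converse, I must extract a generator of $M$ distinct from $v$ so as to trigger the adjacency $v\sim g$ in $\a$, and this relies on the elementary inequality $\phi(n)\geq 2$ for $n\geq 3$ together with Remark \ref{remark maximal} to guarantee that this generator is itself a vertex of $\d$.
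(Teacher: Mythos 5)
Your proof is correct and follows essentially the same route as the paper: the ``if'' direction is the same maximality argument showing the involution generating an order-$2$ maximal cyclic subgroup is adjacent in $\c$ to everything outside it, and your ``only if'' direction merely merges the paper's two steps (first $o(x)=2$ via $x^{-1}$, then maximality of $\langle x\rangle$ via a generator) into one by taking a generator $g\neq v$ of a maximal cyclic subgroup containing $v$. You are in fact slightly more careful than the paper in checking (via Lemma \ref{intersectionlemma} and Remark \ref{remark maximal}) that the witnesses used are genuinely non-isolated, i.e.\ vertices of $\d$.
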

\begin{proof}
Let $\d$ be a dominatable graph. Then there exists a vertex $x$ of $\d$ such that $x$ is adjacent to every vertex of $\d$. Note that if $o(x)\geq 3$, then $x\nsim x^{-1}$ in $\d$; a contradiction. It follows that $o(x)=2$. Moreover, note that $\langle x \rangle \in \m$. Otherwise, the generator of a maximal cyclic subgroup containing $x$ is not adjacent to $x$ in $\d$, which is not possible. Conversely, let $M=\langle x \rangle$ be a maximal cyclic subgroup of order $2$. Then $x\sim y$ for every $y\in G\setminus M$. Consequently, $x$ is a dominating vertex of $\d$. Thus, the result holds.
\end{proof}
\begin{corollary}
The graph $\d$ is complete if and only if $G \cong \mathbb{Z}_2\times \mathbb{Z}_2\times \cdots \times \mathbb{Z}_2.$
\end{corollary}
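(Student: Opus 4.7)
The plan is to combine the previous theorem (the graph $\d$ is dominatable iff $G$ has a maximal cyclic subgroup of order $2$) with a short order-parity argument. For the forward direction, assume $\d$ is complete and contains at least two vertices. Then every vertex is dominating, so by the preceding theorem there exists a maximal cyclic subgroup $M = \langle x \rangle$ with $|M| = 2$. I would then show that every element of $G$ has order at most $2$: suppose on the contrary that some $y \in G$ has $o(y) \geq 3$. Then $y$ and $y^{-1}$ are distinct elements of $\langle y \rangle$, so they are adjacent in $\a$ and hence non-adjacent in $\c$. On the other hand, since $|M| = 2$ we have $y, y^{-1} \notin M$, so by Lemma \ref{intersectionlemma} neither of them is isolated in $\c$; thus both appear as vertices of $\d$. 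This exhibits a non-edge in $\d$ and contradicts completeness, forcing every non-identity element of $G$ to have order $2$, so $G$ is elementary abelian.

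For the converse, I would verify directly that $\d$ is complete whenever $G \cong \mathbb{Z}_2 \times \cdots \times \mathbb{Z}_2$. Every cyclic subgroup of such a $G$ has order at most $2$, and the identity $e$ belongs to every maximal cyclic subgroup, so Lemma \ref{intersectionlemma} identifies $e$ as the unique isolated vertex of $\c$. For any two distinct non-identity elements $u, v \in G$, no cyclic subgroup contains both of them, so $u \nsim v$ in $\a$ and therefore $u \sim v$ in $\c$. Hence $\d$ is the complete graph on $G \setminus \{e\}$.

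The only mildly delicate point is ensuring, in the forward direction, that $y$ and $y^{-1}$ genuinely survive as vertices of $\d$ rather than being discarded as isolated vertices of $\c$. This is precisely where the order-$2$ maximal subgroup $M$ supplied by the preceding theorem does the work: it provides a concrete maximal cyclic subgroup missing both $y$ and $y^{-1}$, which via Lemma \ref{intersectionlemma} guarantees their non-isolation and lets the completeness hypothesis produce the desired contradiction.
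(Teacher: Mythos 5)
Your proof is correct and follows essentially the route the paper intends: the corollary is stated as an immediate consequence of the dominatable theorem, and you derive it exactly that way, using the order-$2$ maximal cyclic subgroup together with Lemma \ref{intersectionlemma} to guarantee that $y$ and $y^{-1}$ are non-isolated, which is the right way to fill in the details the paper leaves implicit.
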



\begin{theorem}{\label{Eulerian}}
For $x \in G$, let $\mathrm{M}_x$ be the union of all the maximal cyclic subgroups of $G$ containing $x$. Then the graph $\d$ is  Eulerian if and only if either $|G|$ is odd or $|\mathrm{M}_x|$ is even for every $x\in V(\d)$.
\end{theorem}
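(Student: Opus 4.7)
The plan is to reduce the Eulerian property to a parity statement about vertex degrees and then evaluate those degrees. By Theorem \ref{1connected}, the graph $\d$ is connected, so $\d$ is Eulerian precisely when every one of its vertices has even degree. The strategy is therefore to express $\deg_\d(x)$ in closed form and determine when it is even.

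First I would observe that in the enhanced power graph $\a$ the closed neighborhood of $x$, namely $\{x\} \cup N_{\a}(x)$, consists of exactly those $y \in G$ for which some cyclic subgroup of $G$ contains both $x$ and $y$; by the definition of a maximal cyclic subgroup this set coincides with $\mathrm{M}_x$. Passing to the complement $\c$ and then restricting to the induced subgraph $\d$ on non-isolated vertices (which does not alter the degree of a vertex that remains, as the deleted vertices contributed no edges) yields $\deg_\d(x) = |G| - |\mathrm{M}_x|$ for every $x \in V(\d)$. Consequently $\d$ is Eulerian if and only if $|G|$ and $|\mathrm{M}_x|$ have the same parity for every $x \in V(\d)$.

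If $|G|$ is even, this common-parity requirement collapses to the condition that $|\mathrm{M}_x|$ is even for every $x \in V(\d)$, which is one branch of the claimed equivalence. To handle the other branch, I would show that when $|G|$ is odd the requirement is automatic. The key observation is that $\mathrm{M}_x$, being a union of subgroups, is closed under inversion, while in a group of odd order every non-identity element $y$ satisfies $y \neq y^{-1}$. Pairing each non-identity $y \in \mathrm{M}_x$ with $y^{-1}$ therefore shows that $|\mathrm{M}_x|$ is odd, whence $|G| - |\mathrm{M}_x|$ is even for every $x$, and the odd-order case is disposed of.

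There is not really a single hard step here: the only substantive move is recognizing that the closed $\a$-neighborhood of $x$ equals $\mathrm{M}_x$, after which the problem becomes pure arithmetic combined with an elementary inverse-pairing argument to cover the odd-order case. No separate analysis by group type is needed.
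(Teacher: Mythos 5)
Your proposal is correct and follows the same skeleton as the paper's proof: connectivity of $\d$ plus the degree formula $\deg_{\d}(x)=|G|-|\mathrm{M}_x|$ (obtained by noting that the closed neighbourhood of $x$ in $\a$ is exactly $\mathrm{M}_x$ and that deleting isolated vertices of $\c$ does not change degrees), after which everything is a parity check, including the converse by contraposition. The one genuine difference is the odd-order case: the paper disposes of it by citing the result of Bera \emph{et al.} that $\a$ is Eulerian when $|G|$ is odd and then complementing degrees, whereas you argue directly that $\mathrm{M}_x$, being a union of cyclic subgroups, is inverse-closed, so in an odd-order group its non-identity elements pair off as $\{y,y^{-1}\}$ and $|\mathrm{M}_x|$ is odd, forcing $|G|-|\mathrm{M}_x|$ to be even. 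Your pairing argument is valid and makes the proof self-contained (it in effect reproves the cited fact in the form needed here), at the cost of nothing; the paper's route simply leans on the known theorem. Both treatments leave the same degenerate case untouched (cyclic $G$, where $\d$ has no vertices), so no gap arises relative to the paper.
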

\begin{proof}
If $|G|$ is odd then the enhanced power graph $\a$ is Eulerian (cf. {\rm \cite[Theorem 2.5]{a.Bera2017}}). Thus, the degree of every vertex of $\a$ is even (see {\rm \cite[Theorem 1.2.26]{b.westgraph}}). Let $x\in V(\d)$ such that $deg(x)=m$ in $\a$ and  let $|G|=n$. Thus, in $\d$, we have $deg(x)=n-m-1$ which is an even number. Consequently, $\d$ is Eulerian. We may now suppose that $|\mathrm{M}_x |$ is even for every vertex of $\d$. Now let $y$ be an arbitrary vertex of $\d$. Note that in $\d$, $deg(y)=|G|-|\mathrm{M}_y |$ is even. Thus, $\d$ is Eulerian. Conversely, suppose that $\d$ is Eulerian. If $|G|$ is odd then there is nothing to prove. If $|G|$ is not odd and $|\mathrm{M}_x |$ is odd for some $x\in V(\d)$, then $deg(x)=|G|-|\mathrm{M}_x |$ is odd; a contradiction.  Thus, $|\mathrm{M}_x |$ must be even for every $x\in V(\d)$.
\end{proof}
If $G$ is a $2$-group then every non-trivial subgroup of $G$ is of even order. Consequently, $|\mathrm{M}_x |$ is even for every $x\in V(\d)$. Thus we have the following corollary of Theorem \ref{Eulerian}.

\begin{corollary}
If $G$ is a $2$-group then the graph $\d$ is Eulerian.
\end{corollary}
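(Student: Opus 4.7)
The plan is to apply Theorem~\ref{Eulerian} directly. Since a $2$-group of order at least $2$ has even order, the first alternative in Theorem~\ref{Eulerian} ($|G|$ odd) fails, and the task reduces to proving that $|\mathrm{M}_x|$ is even for every $x \in V(\d)$. (The trivial and cyclic cases leave $\d$ empty, where there is nothing to check.)

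Fix $x \in V(\d)$. By Lemma~\ref{intersectionlemma}, the identity $e$ is an isolated vertex of $\c$, since it lies in every maximal cyclic subgroup of $G$; so $x \neq e$. Because $G$ is a $2$-group, this forces $o(x) = 2^j$ for some $j \geq 1$, and in particular $|\langle x \rangle|$ is even.

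Let $M_1,\ldots,M_k$ be the maximal cyclic subgroups of $G$ that contain $x$, so that $\mathrm{M}_x = M_1 \cup \cdots \cup M_k$. By inclusion--exclusion,
$$
|\mathrm{M}_x| \;=\; \sum_{\emptyset \neq S \subseteq \{1,\ldots,k\}} (-1)^{|S|+1}\, \left|\bigcap_{i \in S} M_i\right|.
$$
Each intersection $\bigcap_{i \in S} M_i$ is a subgroup of $G$ containing $\langle x \rangle$, so by Lagrange's theorem its order is a multiple of $o(x)$, hence even. Every summand being even, $|\mathrm{M}_x|$ is even, and Theorem~\ref{Eulerian} yields the conclusion.

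The main obstacle to anticipate is that $\mathrm{M}_x$ is a \emph{union} of cyclic subgroups rather than a subgroup itself, so its cardinality is not immediately forced to be a power of $2$; one might worry that pairwise intersections could shave away an odd number of elements. The observation that rescues the argument is that $\langle x \rangle$ sits inside every $M_i$ and therefore inside every intersection appearing in the inclusion--exclusion expansion, so Lagrange makes every term divisible by the even integer $o(x)$.
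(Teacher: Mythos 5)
Your proof is correct and takes essentially the paper's route: apply Theorem~\ref{Eulerian} and reduce to showing $|\mathrm{M}_x|$ is even for every $x\in V(\d)$, using that in a $2$-group every vertex $x\neq e$ generates an even-order subgroup. In fact your inclusion--exclusion step (every intersection of the $M_i$ contains $\langle x\rangle$, so every term is divisible by the even number $o(x)$) supplies the justification for why the \emph{union} $\mathrm{M}_x$ has even cardinality, a point the paper passes over with a bare ``consequently''.
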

Now we give examples of even order groups $G$ such that the graph $\d$ is Eulerian.
\begin{example}
Consider $G$ to be the dihedral group $D_{2n}$. Notice that $G$ has a maximal cyclic subgroup $M=\langle x \rangle$ of order $n$ and every element of $G$ belongs to exactly one maximal cyclic subgroup of $G$. If $n$ is odd then $deg(x)=|G|-|M|$ which is an odd number. Hence, $\overline{\mathcal{P}_E(D_{2n}^*)}$ is not Eulerian. If $n$ is even then every maximal cyclic subgroup of $D_{2n}$ is of even order. Consequently, $|\mathrm{M}_x |$ is even for every $x\in V(\overline{\mathcal{P}_E(D_{2n}^*)})$. It follows that $\overline{\mathcal{P}_E(D_{2n}^*)}$ is Eulerian. Thus, $\overline{\mathcal{P}_E(D_{2n}^*)}$ is Eulerian if and only if $n$ is even.
\end{example}
\begin{example}
 Consider $G$ to be the dicyclic group $Q_{4n}$. Observe that the centre of $Q_{4n}$ is contained in every maximal cyclic subgroup of $G$ and $x\in Q_{4n}\setminus Z(Q_{4n})$ belongs to exactly one maximal cyclic subgroup of $Q_{4n}$. Also, notice that $Q_{4n}$ has $1$ maximal cyclic subgroup of order $2n$ and $n$ maximal cyclic subgroup of order $4$. Consequently, $|\mathrm{M}_x |$ is even for every $x\in V(\overline{\mathcal{P}_E(Q_{4n}^*)})$. Thus, $\overline{\mathcal{P}_E(Q_{4n}^*)}$ is Eulerian for all $n\geq 2$.
\end{example}

Suppose $\Gamma$ is a connected graph with $n$ vertices and $m$ edges. If $c(\Gamma) =m-n+1$ then $\Gamma$ is called $c-$cyclic. We call $\Gamma$ to be unicyclic, bicyclic, tricyclic, tetracyclic and petacyclic if $c(\Gamma)=1,2,3,4$ and $5$, respectively. Clearly, $\Gamma$ is a tree if and only if $c(\Gamma)=0.$ The following lemma is easy to prove.

\begin{lemma}{\label{c of gamma}}
Let $\Gamma '$ be a connected subgraph of a connected graph $\Gamma$. Then $c(\Gamma ') \leq c(\Gamma ).$
\end{lemma}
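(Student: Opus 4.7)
The plan is to derive the inequality from a spanning-tree comparison, using the identity $c(\Gamma) = |E(\Gamma)| - |V(\Gamma)| + 1 = |E(\Gamma) \setminus E(T)|$ for any spanning tree $T$ of a connected graph $\Gamma$. So the target inequality will follow as soon as I exhibit spanning trees $T'$ of $\Gamma'$ and $T$ of $\Gamma$ with $T' \subseteq T$ and $E(\Gamma') \setminus E(T') \subseteq E(\Gamma) \setminus E(T)$.

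First I would fix a spanning tree $T'$ of $\Gamma'$, so that $c(\Gamma') = |E(\Gamma')| - |V(\Gamma')| + 1 = |E(\Gamma') \setminus E(T')|$. Since $\Gamma$ is connected and $T'$ is a forest sitting inside $\Gamma$, I would extend $T'$ greedily to a spanning tree $T$ of $\Gamma$: at each step I add an edge of $E(\Gamma) \setminus E(T')$ that does not close a cycle with the current tree, continuing until every vertex of $V(\Gamma)$ is reached. This yields $T' \subseteq T$ and $c(\Gamma) = |E(\Gamma) \setminus E(T)|$.

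The key observation is that no edge added during this extension can lie in $E(\Gamma')$. Indeed, any $e \in E(\Gamma') \setminus E(T')$ would close a cycle with $T'$, because $T'$ already spans all of $V(\Gamma')$; since $T' \subseteq T_{\text{current}}$ at every stage, such an $e$ would also close a cycle with the current tree and could never be selected. Hence $E(T) \cap E(\Gamma') = E(T')$, which means every $e \in E(\Gamma') \setminus E(T')$ satisfies $e \in E(\Gamma)$ and $e \notin E(T)$. Therefore $E(\Gamma') \setminus E(T') \subseteq E(\Gamma) \setminus E(T)$, and comparing cardinalities gives $c(\Gamma') \leq c(\Gamma)$.

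The statement is essentially bookkeeping, so there is no real conceptual obstacle; the only point that needs attention is verifying the compatibility clause $E(T) \cap E(\Gamma') = E(T')$, which the no-cycle extension rule enforces automatically. A fully equivalent route, if preferred, is to build $\Gamma$ from $\Gamma'$ by first adding the missing edges inside $V(\Gamma')$ (each increases $c$ by one) and then adding the remaining vertices one at a time in an order for which each new vertex is adjacent to the current subgraph (each such addition changes $c$ by $(\text{edges added})-1 \geq 0$); I would mention this as an alternative if space permits.
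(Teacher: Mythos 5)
Your argument is correct: fixing a spanning tree $T'$ of $\Gamma'$, extending it to a spanning tree $T$ of $\Gamma$ with $T' \subseteq T$, and observing that $E(T)\cap E(\Gamma')=E(T')$ (since any edge of $E(\Gamma')\setminus E(T')$ closes a cycle with $T'$) gives $E(\Gamma')\setminus E(T') \subseteq E(\Gamma)\setminus E(T)$ and hence $c(\Gamma')=|E(\Gamma')\setminus E(T')|\leq |E(\Gamma)\setminus E(T)|=c(\Gamma)$, using connectedness of $\Gamma'$ and $\Gamma$ exactly where needed. The paper itself states this lemma without proof (it is introduced as ``easy to prove''), so there is no argument to compare against; your spanning-tree extension, as well as the alternative edge-and-vertex augmentation you sketch, is a perfectly standard and complete way to fill in the omitted verification.
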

Now we classify all finite groups $G$ such that $c(\d)\in \{1,2,3,4,5\}$.

\begin{theorem}{\label{unicyclic}}
Let $G$ be a finite non-cyclic group. Then the following hold:
\begin{itemize}
    \item[(i)] The graph $\d$ is unicyclic if and only if $G$ is isomorphic to $\mathbb{Z}_2\times \mathbb{Z}_2.$
   \item[(ii)] The graph $\d$ is pentacyclic if and only if $G$ is isomorphic to $S_3$.
   \item[(iii)] The graph $\d$ cannot be a bicyclic, tricyclic, and tetracyclic.
\end{itemize}
\end{theorem}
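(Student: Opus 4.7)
Set $|\m|=k$ and $n_i=|\widetilde{M_i}|=\phi(|M_i|)$. Since $G$ is non-cyclic and $|\m|\neq 2$ by Lemma \ref{2maximal}, we have $k\ge 3$. The pivotal observation is that the set $\widetilde{M_1}\cup\cdots\cup\widetilde{M_k}$ induces the complete multipartite graph $K_{n_1,\ldots,n_k}$ inside $\d$: two generators of the same $M_i$ both lie in $M_i$ and are therefore non-adjacent in $\c$, while a generator $x\in\widetilde{M_i}$ cannot share any cyclic subgroup with a generator $y\in\widetilde{M_j}$ for $i\neq j$, since $\langle x\rangle=M_i$ is already maximal. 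Applying Lemma \ref{c of gamma} to this connected subgraph gives
$$c(\d)\;\ge\; c\bigl(K_{n_1,\ldots,n_k}\bigr)\;\ge\;c(K_k)\;=\;\tfrac{(k-1)(k-2)}{2}.$$
Demanding $c(\d)\le 5$ forces $(k-1)(k-2)\le 10$, hence $k\in\{3,4\}$.

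Next I enumerate the tuples $(n_1,\ldots,n_k)$ with $n_i\in\phi(\mathbb{Z}_{>0})=\{1,2,4,6,\dots\}$ satisfying
$$c(K_{n_1,\ldots,n_k})=\sum_{i<j}n_in_j-\sum_i n_i+1 \;\le\; 5.$$
Up to permutation the survivors are $(1,1,1),(1,1,2),(1,1,4),(1,2,2)$ when $k=3$, and $(1,1,1,1),(1,1,1,2)$ when $k=4$. For each one I read back the possible orders $|M_i|\in\phi^{-1}(n_i)$, all of which lie in $\{2,3,4,5,6,8,10,12\}$. The maximality of each $M_i$ forbids $M_i\subseteq M_j$, so every pairwise intersection $M_i\cap M_j$ is a proper subgroup of both; applying Lagrange to the orders involved pins down $|M_i\cap M_j|$ in each branch, and inclusion-exclusion on $G=\bigcup_i M_i$ then determines $|G|$. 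Confronting $|G|$ with the Lagrange requirement that every $|M_i|$ divide $|G|$, and with the small-order classification in Table \ref{table}, eliminates every branch except two: $k=3$ with all $|M_i|=2$ (forcing $G\cong\mathbb{Z}_2\times\mathbb{Z}_2$) and $k=4$ with $|M_1|=|M_2|=|M_3|=2,\ |M_4|=3$ (forcing $|G|=6$, hence $G\cong S_3$).

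In both surviving groups every non-identity element is itself a generator of some maximal cyclic subgroup, so $V(\d)$ consists exactly of those generators and $\d$ coincides with the lower-bound graph. For $\mathbb{Z}_2\times\mathbb{Z}_2$ we obtain $\d\cong K_{1,1,1}=K_3$ and $c(\d)=1$, so $\d$ is unicyclic; for $S_3$ we obtain $\d\cong K_{2,1,1,1}$ and $c(\d)=5$, so $\d$ is pentacyclic. The case analysis has simultaneously shown that $c(\d)\in\{2,3,4\}$ is not realised by any finite group, which yields (iii).

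The main obstacle is the multi-branch elimination in the middle step: each candidate tuple splits according to the precise orders $|M_i|\in\phi^{-1}(n_i)$, and for every sub-case one must determine $|M_i\cap M_j|$ under the maximality constraint, compute $|G|$ via inclusion-exclusion, and then test the result against Lagrange's theorem and the known groups of that order. Each individual branch reduces to a one-line calculation, but the bookkeeping needed to verify that no potential group slips through is what occupies the bulk of the argument.
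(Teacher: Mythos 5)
Your proposal is correct and follows essentially the same route as the paper's proof: the lower bound comes from Lemma \ref{c of gamma} applied to the subgraph induced by the generators of the maximal cyclic subgroups, and the surviving candidates are eliminated or confirmed by the same combination of Remark \ref{remark maximal}, Lagrange's theorem and Table \ref{table}, yielding $\mathbb{Z}_2\times\mathbb{Z}_2$ (with $c=1$) and $S_3$ (with $c=5$) and hence part (iii). The only difference is organizational: you make explicit that the generators induce $K_{n_1,\ldots,n_k}$ with $n_i=\phi(|M_i|)$ and filter the tuples by the exact formula $c(K_{n_1,\ldots,n_k})=\sum_{i<j}n_in_j-\sum_i n_i+1\le 5$, which packages the paper's subcase-by-subcase induced-subgraph bounds (its values $6,7,8,9$) into one uniform computation.
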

\begin{proof}
In view of Lemma \ref{2maximal}, we prove the result through the following cases on the cardinality of the set $\m$.
\noindent\textbf{Case-1:} $|\m |=3$. Let $M_1, M_2$ and $M_3$ be the maximal cyclic subgroups of $G$ such that $|M_i|=m_i$ for $i\in \{1,2,3\}$. Without loss of generality, assume that $\phi(m_1) \geq \phi(m_2) \geq \phi(m_3)$. Now, we have the following subcases:

\textbf{Subcase-1.1:} $\phi(m_1)=\phi(m_2)=\phi(m_3)=1.$ It follows that $m_1=m_2=m_3=2$. The identity element belongs to every maximal cyclic subgroup of $G$. Hence, we get $o(G)=4$ and by Table \ref{table}, $G\cong \mathbb{Z}_2 \times \mathbb{Z}_2$. Consequently,  $\d \cong K_3$ and $c(K_3)=1$. Thus, $\d$ is unicyclic. 

\textbf{Subcase-1.2:} $\phi(m_1)=2$ and $\phi(m_2)=1=\phi(m_3)$.
Then $m_2=m_3=2$. Consequently, by Remark \ref{remark maximal}, $m_1+2=o(G)$ and by Lagrange's theorem, $m_1\leq \frac{o(G)}{2}$. It follows that $o(G)\leq 4.$ But $\phi(m_1)=2$ implies that $m_1\geq 3$, which is not possible.

\textbf{Subcase-1.3:} $\phi(m_1)=2=\phi(m_2)$ and $\phi(m_3)=1$. It follows that $m_1,m_2 \in \{3,4,6\}$ and $m_3=2$. Now, let us assume that $m_j=3$ and $m_k=t$, for distinct $j,k\in \{1,2\}$ and $t\in \{3,4,6\}$. Then by Remark \ref{remark maximal}, $o(G)\in \{6,7,9\}$. By Table \ref{table}, no such group exists. Now if $m_1=m_2=4$ and $m_3=2$, then $|M_1\cap M_2|\in \{1,2\}$. Consequently, by Remark \ref{remark maximal}, $o(G)\in \{7,8\}$. The existence of a subgroup of order $4$ implies that $o(G)=8$. By Table \ref{table}, no such group exists.\\
 For distinct $i,j\in \{1,2\}$, if $m_i=4, m_j=6$ and $m_3=2$, then $|M_1\cap M_2|\in \{1,2\}$. Consequently, by Remark \ref{remark maximal}, $o(G)\in \{9,10\}$.  But the existence of a subgroup of order $6$ follows that no such group exists. If $m_1=m_2=6$ and $m_3=2$ then $|M_1\cap M_2|\in \{1,2,3\}$. Consequently, by Remark \ref{remark maximal}, $o(G)\in \{10,11,12\}$. Since there exists a subgroup of order $6$, we get $o(G)=12$. By Table \ref{table}, no such group exists.
  
\textbf{Subcase-1.4:} $\phi(m_1)=\phi(m_2)=\phi(m_3)=2$. Let $\Gamma$ be the  subgraph of $\d$ induced by the set $\g$. Then note that $c(\Gamma)=7$. By Lemma \ref{c of gamma}, $c(\d)\geq 7$.

\textbf{Subcase-1.5:} $\phi(m_1)\geq 4$ and $\phi(m_2)=\phi(m_3)=1$. Then $m_2=m_3=2$. By Remark \ref{remark maximal}, $m_1+2=o(G)$ and by Lagrange's theorem, $m_1 \leq \frac{o(G)}{2}$. It follows that $o(G)\leq 4$, and so $m_1\leq 2$, which is not possible.

\textbf{Subcase-1.6:} $\phi(m_1)\geq 4, \  \phi(m_2)\geq 2 $ and $ \phi(m_3)\geq 1$. Let $a_1,a_2,a_3,a_4$ be generators of $M_1$ and $b_1,b_2$ be generators of $M_2$. Further, suppose that $M_3= \langle c_1 \rangle$. Let $\Gamma$ be the subgraph of $\d$ induced by the set $S=\{a_1,a_2,a_3 ,a_4, b_1,b_2,c_1\}$. Then $c(\Gamma)=8$ and by Lemma \ref{c of gamma}, we get $c(\d)\geq 8.$

   

\noindent\textbf{Case-2:} $|\m |=4$. Let $M_1, M_2, M_3$ and $M_4$ be the maximal cyclic subgroups of $G$ such that $|M_i|=m_i$ for $i\in \{1,2,3,4\}$. Without loss of generality, assume that $\phi(m_1) \geq \phi(m_2) \geq \phi(m_3) \geq \phi(m_4)$. Now, we have the following subcases:

 \textbf{Subcase-2.1:} $\phi(m_1)=\phi(m_2)=\phi(m_3)=\phi(m_4)=1.$ Then $m_1=m_2=m_3=m_4=2$. The identity element belongs to every maximal cyclic subgroup of $G$. Hence, we obtain $o(G)=5$. By Table \ref{table}, no such group exists.
 
 \textbf{Subcase-2.2:} $\phi(m_1)=2$ and $\phi(m_2)=\phi(m_3)=\phi(m_4)=1$. Then $m_1\in \{3,4,6\}$ and $m_2=m_3=m_4=2$. Consequently, by Remark \ref{remark maximal}, $o(G)\in \{6,7,9\}$. The existence of a subgroup of order $2$ gives $o(G)=6$. By Table \ref{table}, $G\cong S_3$. By Figure \ref{S3}, we get $c(\overline{\mathcal{P}_E(S_3^*)}=5$. Thus, $\overline{\mathcal{P}_E(S_3^*)}$ is pentacyclic.
  
  \textbf{Subcase-2.3:} $\phi(m_1)\geq 2,\  \phi(m_2)\geq 2, \ \phi(m_3)\geq 1$ and $\phi(m_4)\geq 1$. Let $a_1,a_2$ be generators of $M_1$ and $b_1,b_2$ be generators of $M_2$. Further, let $M_3= \langle c_1 \rangle$ and $M_4= \langle d_1 \rangle$. Let $\Gamma$ be the subgraph of $\d$ induced by the set $S=\{a_1 ,a_2, b_1,b_2,c_1,d_1\}$. Then $c(\Gamma)=8$ and by Lemma \ref{c of gamma}, we get $c(\d)\geq 8.$
  
  \textbf{Subcase-2.4:} $\phi(m_1)\geq 4$ and $\phi(m_i)\geq 1$ for $i\in \{2,3,4\}$. Let $a_1, a_2, a_3, a_4$ be generators of $M_1$, and let $M_2 = \langle b_1 \rangle, M_3 = \langle c_1 \rangle$ and $M_4 = \langle d_1 \rangle $. Let $\Gamma$ be the subgraph of $\d$ induced by the set $S=\{a_1 ,a_2, a_3, a_4 ,b_1,c_1,d_1\}$. Then $c(\Gamma)=9$. By Lemma \ref{c of gamma}, we get $c(\d)\geq 9.$
  
\noindent\textbf{Case-3:} $|\m |\geq 5$. Consider the set $S=\{g_1,g_2,\ldots ,g_5\}$, where $M_i=\langle g_i\rangle \in \m$ for $i\in \{1,2,\ldots, 5\}$. Note that the
 subgraph of $\d$ induced by the set $S$ is isomorphic to the complete graph $K_5$. Since $c(K_5)=6$,  by Lemma \ref{c of gamma}, we obtain $c(\d)\geq 6.$
 
 Thus, by all the cases, the result holds.
\end{proof}

\section{Embedding of $\d$ on surfaces}
In this section, we study the embedding of the graph $\d$ on various surfaces without edge crossing. We classify all finite groups $G$ such that the graph $\d$ is outerplanar, planar, projective-planar and toroidal, respectively. Moreover, we show that there does not exist a group $G$ such that the cross-cap of the graph $\d$ is two. 
\begin{theorem}
Let $G$ be a finite non-cyclic group. Then 
\begin{itemize}
    \item[(i)] the graph $\d$ is outerplanar if and only if $G$ is isomorphic to $\mathbb{Z}_2\times \mathbb{Z}_2$.
   \item[(ii)] the graph $\d$ is planar if and only if $G$ is isomorphic to one of the three groups: $\mathbb{Z}_2\times \mathbb{Z}_2$, $S_3$, $Q_8$.
   \item[(iii)] the graph $\d$ is projective-planar if and only if $G$ is isomorphic to either $D_8$ or $\mathbb{Z}_2\times \mathbb{Z}_4$.
    \item[(iv)] the graph $\d$ cannot have cross-cap $2$.
     \item[(v)] the graph $\d$ is toroidal if and only if $G$ is isomorphic to one of the following $5$ groups: 
     \[D_8, \mathbb{Z}_2\times \mathbb{Z}_4, \mathbb{Z}_3\times \mathbb{Z}_3, \mathbb{Z}_2\times \mathbb{Z}_6, \mathbb{Z}_2\times \mathbb{Z}_2\times \mathbb{Z}_2.\]
\end{itemize}
\end{theorem}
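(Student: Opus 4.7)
The linchpin is a single structural observation. If $M_1,\dots,M_k$ are the distinct maximal cyclic subgroups of $G$ with generator sets $\widetilde{M}_i$ of cardinalities $\phi(|M_i|)$, then the subgraph of $\d$ induced on $\widetilde{M}_1\cup\cdots\cup\widetilde{M}_k$ is precisely the complete multipartite graph $K_{\phi(|M_1|),\dots,\phi(|M_k|)}$: two generators of the same $M_i$ lie in the common cyclic group $M_i$ and are non-adjacent in $\c$, while generators of distinct $M_i$ and $M_j$ cannot lie in any common cyclic subgroup by maximality and are therefore adjacent. Since planarity, outerplanarity, genus and crosscap are all monotone under taking subgraphs, any lower bound on these invariants for a $K_n$ or a $K_{m,n}$ appearing in this multipartite piece transfers directly to $\d$.

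\textbf{Necessity.} For each of (i), (ii), (iii) and (v) I feed the explicit values from Theorems \ref{outerplanarcondition}, \ref{planarcondition1} and \ref{planarcondition} into the observation above to obtain crude upper bounds on $|\m|$ and on the multiset $\{\phi(|M_i|)\}$. Outerplanarity forces $|\m|=3$ with all $\phi(|M_i|)=1$; planarity forces $|\m|\le 4$ with the partition sizes controlled by the prohibition of $K_5$ and $K_{3,3}$; projective-planarity forces $|\m|\le 6$ together with further bounds from $\overline{\gamma}(K_7)=3$, $\overline{\gamma}(K_{4,4})=2$ and $\overline{\gamma}(K_{3,5})=2$; toroidality forces $|\m|\le 7$ with restrictions coming from $\gamma(K_8)=2$ and $\gamma(K_{4,5})=2$. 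Each coarse bound is then refined using Remark \ref{remark maximal} (to count the non-generator overlaps) together with Lagrange's theorem, in the spirit of Subcases 1.1--1.6 and 2.1--2.4 of the proof of Theorem \ref{unicyclic}. The surviving candidates are of small order and are identified via Table \ref{table}, supplemented where necessary by a short direct analysis of the few larger orders that the higher-partition cases can reach.

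\textbf{Sufficiency.} For each group on the stated lists I would describe $\d$ explicitly from its maximal cyclic subgroup data in Table \ref{table}: for instance $\overline{\mathcal{P}_E((\mathbb{Z}_2\times\mathbb{Z}_2)^*)}\cong K_3$ and $\overline{\mathcal{P}_E(Q_8^*)}\cong K_{2,2,2}$, and analogous small complete multipartite graphs (or near-multipartite graphs obtained by joining an independent set to a smaller multipartite piece) are obtained in each remaining case. A single explicit figure in each instance then realises the required drawing on the plane, the projective plane, or the torus.

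\textbf{Main obstacle.} The delicate step is (iv). No single forbidden subgraph suffices to rule out crosscap exactly $2$, because $K_6$, $K_{3,3}$ and $K_{3,4}$ all have crosscap $1$ while $K_{4,4}$ and $K_{3,5}$ have crosscap exactly $2$. The plan is to enumerate the finitely many configurations $(|\m|,\{\phi(|M_i|)\})$ that survive the projective-planar obstruction of (iii) but are not already accounted for in the projective-planar list, and for each such configuration either locate a subgraph of $\d$ of crosscap at least $3$ (typically $K_7$ or $K_{4,5}$) or verify that $\d$ is in fact already projective-planar. This bookkeeping, driven entirely by the structural observation above and by Theorem \ref{planarcondition}(iii)--(iv), is what forces the spectrum of $\overline{\gamma}(\d)$ to skip the value $2$.
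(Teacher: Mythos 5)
Your overall strategy is the same as the paper's: the induced complete multipartite graph $K_{\phi(m_1),\dots,\phi(m_k)}$ on the generator set $\g$, a case analysis over $|\m|$ and the multiset $\{\phi(m_i)\}$ refined by Remark \ref{remark maximal}, Lagrange's theorem and Table \ref{table}, forbidden-subgraph bounds from Theorem \ref{planarcondition}, and explicit embeddings for sufficiency. On that plan parts (i), (ii), the necessity direction of (iii), and (v) go through essentially as in the paper (for (iii) it is enough that a subgraph of crosscap $2$, e.g.\ $K_{4,4}$ or $K_{3,5}$, already destroys projective-planarity).

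The genuine gap is in (iv), exactly at the point you flag as the main obstacle: your dichotomy (``locate a subgraph of crosscap at least $3$, typically $K_7$ or $K_{4,5}$, or verify that $\d$ is projective-planar'') does not cover the groups $\mathbb{Z}_3\times\mathbb{Z}_3$ and $\mathbb{Z}_2\times\mathbb{Z}_6$, for which $\d\cong K_{2,2,2,2}$ and $\d\cong K_{3,3,3}$ respectively. Neither graph is projective-planar (the first contains $K_{4,4}$, the second contains $K_{3,5}$, both of crosscap $2$), and neither contains any complete or complete bipartite subgraph of crosscap at least $3$: the clique numbers are $4$ and $3$, so there is no $K_7$, and a $K_{a,b}$ subgraph forces each part of the multipartite graph to lie entirely on one side, so $K_{2,2,2,2}$ yields nothing beyond $K_{4,4}$ and $K_{3,3,3}$ nothing beyond $K_{3,5}$ and $K_{3,6}$, all of crosscap $2$. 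The Euler-formula bound $\overline{\gamma}\geq\lceil m/3\rceil-n+2$ also gives only $2$ for both graphs ($n=8$, $m=24$ and $n=9$, $m=27$). Thus with the tools in your plan you can conclude only $\overline{\gamma}(\d)\geq 2$ for these two groups, which is precisely the value you must rule out. The paper closes this by importing the known exact values $\overline{\gamma}(K_{2,2,2,2})=3$ from \cite{a.jungerman1979nonorientable} and $\overline{\gamma}(K_{3,3,3})=3$ from \cite{a.Ellingham2006}; some such external determination (or a genuinely new argument excluding Klein-bottle embeddings of these two specific graphs) is needed, and your proposal as written stalls here.
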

\begin{proof}
In view of Lemma \ref{2maximal}, we prove the result through the following cases on the cardinality of the set $\m$.
\noindent\textbf{Case-1: } $|\m |=3$. Let $M_1, M_2$ and $M_3$ be the maximal cyclic subgroups of $G$ such that $|M_i|=m_i$ for $i\in \{1,2,3\}$. Without loss of generality, assume that $\phi(m_1) \geq \phi(m_2) \geq \phi(m_3)$. Now, we have the following subcases:

 \textbf{Subcase-1.1:} $\phi(m_1)=\phi(m_2)=\phi(m_3)=1.$ By Subcase-$1.1$ of the Theorem \ref{unicyclic}, we get $G\cong \mathbb{Z}_2\times \mathbb{Z}_2$.
 
 \textbf{Subcase-1.2:} $\phi(m_1)=2$ and $\phi(m_2)=\phi(m_3)=1$. By Subcase-$1.2$ of the Theorem \ref{unicyclic}, no such group exists.
 
 \textbf{Subcase-1.3:} $\phi(m_1)=2=\phi(m_2)$ and $\phi(m_3)=1.$ By Subcase-$1.3$ of the Theorem \ref{unicyclic}, no such group exists.
 
 \textbf{Subcase-1.4:} $\phi(m_1)=\phi(m_2)=\phi(m_3)=2.$ Then $m_i\in \{3,4,6\}$ for each $i\in \{1,2,3\}$. For distinct $i,j,k \in \{1,2,3\}$, if $m_i=m_j=3$ and $m_k=t$, where $t\in \{3,4,6\}$, then by Remark \ref{remark maximal}, $o(G)\in \{7,8, 10\}$. But the existence of a subgroup of order $3$ follows that no such group exists. For distinct $i,j,k \in \{1,2,3\}$, if $m_i=3$ and $m_j=m_k=4$, then $|M_j\cap M_k| \in \{1,2\}$. Consequently, we get $o(G)\in \{8,9\}$ (see by Remark \ref{remark maximal}), which is not possible because $G$ has subgroups of order $3$ and $4$. For distinct $i,j,k \in \{1,2,3\}$, if $m_i=m_j=4$ and $m_k=6$, then notice that the cardinality of the  intersection of any two of these maximal cyclic subgroups is at most $2$. Consequently, by Remark \ref{remark maximal}, $o(G)\in \{10,11,12\}$. Since there exists a maximal cyclic subgroup of order $6$, we obtain $o(G)=12$. By Table \ref{table}, no such group exists. If $m_1=m_2=m_3=4$, then by Remark \ref{remark maximal}, $o(G)\in \{8,9,10\}$. The existence of a subgroup of order $4$ gives $o(G)=8$. By Table \ref{table}, we have $G\cong Q_8$.\\
For distinct $i,j,k \in \{1,2,3\}$, if $m_i=m_j=6$ and $m_k=3$,  then $|M_i\cap M_j|\in \{1,2,3\}$. Consequently, we get $o(G)\in \{11,12,13\}$ (see Remark \ref{remark maximal}), and therefore $o(G)=12$. By Table \ref{table}, no such group exists. For distinct $i,j,k \in \{1,2,3\}$, if $m_i=m_j=6$ and $m_k=4$,  then by Remark \ref{remark maximal}, $o(G)\in \{11,12,13,14\}$. The existence of a subgroup of order $4$ gives $o(G)=12$. By Table \ref{table}, no such group exists. If $m_1=m_2=m_3=6$ then by Remark \ref{remark maximal}, and the existence of order $6$ gives $o(G)=12$. By Table \ref{table}, $G\cong \mathbb{Z}_2\times \mathbb{Z}_6$. For distinct $i,j,k \in \{1,2,3\}$, if $m_i=3,m_j=4$ and $m_k=6$, then $lcm(3,4,6)=12$ divides $o(G)$. The identity element belongs to every maximal cyclic subgroup of a group $G$. Consequently, by Remark \ref{remark maximal}, $o(G)\le 11$; a contradiction.

 \textbf{Subcase-1.5:} $\phi(m_1)\geq 4$ and $\phi(m_2)=\phi(m_3)=1$. By Subcase-$1.5$ of the Theorem \ref{unicyclic}, no such group exists.
 
 \textbf{Subcase-1.6:} $\phi(m_1)=4$, $\phi(m_2)=2$ and $\phi(m_3)=1$. Then $m_1\in \{5,8,10,12\}, \  m_2\in \{3,4,6\}$ and $m_3=2$.  If $m_1=5$ and $m_2=t$, where $t\in \{3,4,6\}$, then by Remark \ref{remark maximal}, we obtain $o(G)\in \{8,9,11\}$. The existence of subgroup of order $5$ implies that no such group exists.
If $m_1=s,$ $m_2=t$, where $s\in \{8,10,12\}$ and $t\in \{3,4,6\}$ then by Remark \ref{remark maximal},  we obtain $o(G)\leq s+t$ as the identity element belongs to every maximal cyclic subgroup of $G$. By Lagrange's theorem, $2s\leq o(G)$. It implies that $s\leq t$; a contradiction. 

 \textbf{Subcase-1.7:} $\phi(m_1)=4$ and $\phi(m_2)=\phi(m_3)=2$. Then $m_1\in \{5,8,10,12\}$ and  $m_2, m_3\in \{3,4,6\}$. If $m_1=5$ and $m_2=m_3=3$, then by Remark \ref{remark maximal}, $o(G)=9$. The existence of a subgroup of order $5$ implies that no such group exists. Now assume that at least one of $m_2$ and $m_3$ is not equal to $3$. Let $x$ be an element of order $2$ in $M_2\cup M_3$. Then the subgraph induced by the set $\g \cup \{x\}$ contains a subgraph isomorphic to $K_{4,5}$. Then by Theorem \ref{planarcondition}, we get $\gamma (\d) \geq 2$ and $\overline{\gamma}(\d) \geq 3$.\\
If $m_1\in \{8, 12\}$ and $m_2,\ m_3 \in \{3,4,6\}$ then the subgraph induced by the set $\g \cup \{x,y\}$ contains a subgraph isomorphic to $K_{6,4}$, where $x$ and $y$ are elements of order $4$ in $M_1$. Then by Theorem \ref{planarcondition}, we obtain $\gamma (\d) \geq 2$ and $\overline{\gamma}(\d) \geq 4$. If $m_1=10$ and $m_2,\ m_3 \in \{3,4,6\}$ then the subgraph induced by the set $\g \cup \{x,y,z,t\}$, where $x,y,z$ and $t$ are elements of order $5$ in $M_1$, contains $K_{8,4}$ as a subgraph. It follows that $\gamma (\d) \geq 3$ and $\overline{\gamma}(\d) \geq 6$.

 \textbf{Subcase-1.8:} $\phi(m_1)\geq 4$, $\phi(m_2)\geq 4$ and $\phi(m_3)\ge 1$. Then the subgraph induced by the set $\g$ contains a subgraph isomorphic to $K_{4,5}$. By Theorem \ref{planarcondition}, we get $\gamma (\d) \geq 2$ and $\overline{\gamma}(\d) \geq 3$.
 
 
 \textbf{Subcase-1.9:} $\phi(m_1)\geq 6$, $ \phi(m_2)=2$ and $\phi(m_3)= 1$. Then $m_2\in \{3,4,6\}$ and $m_3=2$. By Remark \ref{remark maximal}, $o(G)\leq m_1+6$. The existence of a subgroup of order $m_1$ follows that $m_1\leq 6$; which is not possible.
 
 \textbf{Subcase-1.10:} $\phi(m_1)\geq 6$, $ \phi(m_2)= 2$ and $\phi(m_3)= 2$. Then the subgraph induced by the set $\g$ contains a subgraph isomorphic to $K_{4,5}$. Then by Theorem \ref{planarcondition}, we get $\gamma (\d) \geq 2$ and $\overline{\gamma}(\d) \geq 3$.
 
 
\noindent\textbf{Case-2: } $|\m |=4$. Let $M_1, M_2, M_3$ and $M_4$ be the maximal cyclic subgroups of $G$ such that $|M_i|=m_i$ for $i\in \{1,2,3,4\}$. Without loss of generality, assume that $\phi(m_1) \geq \phi(m_2) \geq \phi(m_3) \geq \phi(m_4)$. Now, we have the following subcases:

  \textbf{Subcase-2.1:} $\phi(m_1)=\phi(m_2)=\phi(m_3)=\phi(m_4)=1.$ By Subcase-$2.1$ of the Theorem \ref{unicyclic}, no such group exists.
  
  \textbf{Subcase-2.2:} $\phi(m_1)=2$ and $\phi(m_2)=\phi(m_3)=\phi(m_4)=1.$ Then $m_1\in\{3,4,6\}$ and $m_i=2$ for every $i\in \{2,3,4\}$. By Remark \ref{remark maximal}, we obtain $o(G)\in \{6,7,9\}$. Since there exists a subgroup of order $2$, we have $o(G)=6$. By Table \ref{table}, we obtain $G\cong S_3$.
  
\textbf{Subcase-2.3:} $\phi(m_1)=\phi(m_2)=2$ and $\phi(m_3)=\phi(m_4)=1$. Then $m_1,m_2\in \{3,4,6\}$ and $m_3=m_4=2$. If $m_1=3$ and $m_2=t$, where $t\in \{3,4,6\}$, then by Remark \ref{remark maximal}, we obtain $o(G)\in \{7,8,10\}$. The existence of a subgroup of order $3$ implies that no such group exists.  If $m_1=4$ and $m_2=t$, where $t\in \{3,4,6\}$, then by Remark \ref{remark maximal}, we get $o(G)\in \{8,9,10,11\}$. Since there exists a subgroup of order $4$, we have $o(G)=8$. By Table \ref{table}, we obtain $G\cong \mathbb{Z}_2\times \mathbb{Z}_4$.
    If $m_1=6$ and $m_2=t$, where $t\in \{3,4,6\}$, then  $o(G)\in \{10,11,12,13\}$ (cf. Remark \ref{remark maximal}). The existence of a subgroup of order $6$ gives $o(G)=12$. By Table \ref{table}, no such group exists.
    
   \textbf{Subcase-2.4:} $\phi(m_1)=\phi(m_2)=\phi(m_3)=2$ and $\phi(m_4)=1.$ Then $m_1,m_2,m_3\in  \{3,4,6\}$ and $m_4=2$. For distinct  $i,j,k \in \{1,2,3\}$, if $m_i=m_j=3$ and $m_k=t$, where $t\in \{3,4,6\}$, then by Remark \ref{remark maximal}, $o(G)\in \{8,9,11\}$. The existence of subgroups of order $2$ and $3$, follows that that no such group exists.\\
    For distinct  $i,j,k \in \{1,2,3\}$, if $m_i=3$ and $m_j=m_k=4$, then $|M_j\cap M_k|\in \{1,2\}$. Consequently, we get $o(G)\in \{9,10\}$ (see Remark \ref{remark maximal}), which is not possible because $G$ has a subgroup of order $4$. For distinct  $i,j,k \in \{1,2,3\}$, if $m_i=m_j=4$ and $m_k=6$,  then notice that the cardinality  of  intersection of any two of these maximal cyclic subgroups is at most $2$. Consequently, by Remark \ref{remark maximal}, $o(G)\in \{11,12,13\}$. Since $G$ has a subgroup of order $2$, we have $o(G)=12$. By Table \ref{table}, no such group exists. If $m_1=m_2=m_3=4$ then by Remark \ref{remark maximal}, $o(G)\in \{9,10,11\}$. The existence of a subgroup of order $4$ implies that no such group exists.\\
   For distinct  $i,j,k \in \{1,2,3\}$,  if $m_i=m_j=6$ and $m_k=3$, then $|M_i\cap M_j|\in \{1,2,3\}$. Consequently, by Remark \ref{remark maximal}, $o(G)\in \{12,13,14\}$. The existence of a subgroup of order $3$ implies that $o(G)=12$. By Table \ref{table}, no such group exists. For distinct  $i,j,k \in \{1,2,3\}$, if $m_i=m_j=6$ and $m_k=4$, then by Remark \ref{remark maximal}, $o(G)\in \{12,13,14,15\}$ and therefore $o(G)=12$. By Table \ref{table}, no such group exists.
   If $m_1=m_2=m_3=6$, then by Remark \ref{remark maximal}, $13 \leq o(G)\leq 17$. Consequently, the existence of a subgroup of order $6$ implies that $o(G)=12$. By Table \ref{table}, no such group exists. For distinct  $i,j,k \in \{1,2,3\}$, if $m_i=3,m_j=4$ and $m_k=6$, then $lcm(3,4,6)$ divides $o(G)$. By Remark \ref{remark maximal}, $o(G)\leq 12$. Thus, we obtain $o(G)=12$. By Table \ref{table}, no such group exists. 
   
   \textbf{Subcase-2.5:} $\phi(m_1)=\phi(m_2)=\phi(m_3)=\phi(m_4)=2.$ If $m_i=3$ for every $i\in \{1,2,3,4\}$, then $o(G)=9$ and therefore by Table \ref{table}, $G\cong \mathbb{Z}_3\times \mathbb{Z}_3$. 

   Notice that if $x\notin \g$ is an element of $M_i$, for $i\in \{1,2,3,4\}$, such that $x$ belongs to at most two maximal cyclic subgroups then the subgraph induced by $\g \cup \{x\}$ contains a subgraph isomorphic to $K_{4,5}$. For distinct $i,j,k,l\in \{1,2,3,4\},$ let $m_i=6$, $m_j,m_k,m_l\in \{3,4,6\}$ and let $a,b \in M_i$ such that $o(a) = 2$, $o(b) = 3$. Then either $a$ or $b$ belongs to at most two maximal cyclic subgroups. Consequently, $G$ contains a subgraph isomorphic to $K_{4,5}$. Then by Theorem \ref{planarcondition}, we get $\gamma (\d) \geq 2$ and $\overline{\gamma}(\d) \geq 3$. For distinct $i,j,k,l\in \{1,2,3,4\}$, if $m_i=3$ and $m_j=m_k=m_l=4$,  then by Remark \ref{remark maximal}, we get $o(G)\in \{10,11,12\}$. The existence of a subgroup of order $3$ follows that $o(G)=12$. By Table \ref{table}, no such group exists.  If $m_i=4$ for every $i\in \{1,2,3,4\}$, then by Remark \ref{remark maximal}, we get $o(G)\in \{10,11,12,13\}$. The existence of a subgroup of order $4$ implies that $o(G)=12$. By Table \ref{table}, no such group exists.
    
    \textbf{Subcase-2.6:} $\phi(m_1)\geq 4$ and $\phi(m_2)=\phi(m_3)=\phi(m_4)=1.$ By the similar argument used in Subcase-$1.5$ of the Theorem \ref{unicyclic}, no such group exists.
    
     \textbf{Subcase-2.7:} $\phi(m_1)= 4$,\ $\phi(m_2)=2$ and $\phi(m_3)=\phi(m_4)=1$. Then $m_1\in \{5,8,10,12\}$, $m_2\in \{3,4,6\}$ and $m_3=m_4=2$.  If $m_1=5$, $m_2=t$, where $t\in \{3,4,6\}$, then $o(G)\in \{9,10,12\}$ (see Remark \ref{remark maximal}). Since there exists a subgroup of order $5$, we get $o(G)=10$. By Table \ref{table}, no such group exists. If $m_1=s$, $m_2=t$, where $s\in \{8,10,12\}$, and $ t\in \{3,4,6\}$, then $M_1$ has an element $x$ such that $\langle x \rangle \neq M_1$ and $x\notin M_2\cup M_3 \cup M_4$. Then the graph induced by the set $\g \cup \{x\}$ contains a subgraph isomorphic to $K_{4,5}$. Then by Theorem \ref{planarcondition}, we obtain $\gamma (\d) \geq 2$ and $\overline{\gamma}(\d) \geq 3$.
    
     \textbf{Subcase-2.8:} $\phi(m_1)\geq 4$, \ $\phi(m_2)\geq 2$, \  $\phi(m_3)\geq 2 $ and $\phi(m_4)\geq 1$. Then the graph induced by the set $\g $ contains a subgraph isomorphic to $K_{4,5}$. Then by Theorem \ref{planarcondition}, we obtain $\gamma (\d) \geq 2$ and $\overline{\gamma}(\d) \geq 3$.
     
      \textbf{Subcase-2.9:} $\phi(m_1)\geq 4$, \ $\phi(m_2)\geq 4$, $\phi(m_3)= 1 $ and $\phi(m_4)= 1.$ Then the graph induced by the set $\g $ contains a subgraph isomorphic to $K_{6,5}$. Then by Theorem \ref{planarcondition}, we obtain $\gamma (\d) \geq 2$ and $\overline{\gamma}(\d) \geq 4$.
     
    \textbf{Subcase-2.10:} $\phi(m_1)\geq 6$, \ $\phi(m_2)=2$ and $\phi(m_3)=\phi(m_4)=1.$ Then the subgraph induced by the set $\g$ contains a subgraph isomorphic to $K_{4,6}$. Consequently, by Theorem \ref{planarcondition}, we obtain $\gamma (\d) \geq 2$ and $\overline{\gamma}(\d) \geq 4$.

 \noindent\textbf{Case-3: } $|\m |=5$. Let $M_1, M_2, M_3, M_4$ and $M_5$ be the maximal cyclic subgroups of $G$ such that $|M_i|=m_i$ for $i\in \{1,2,3,4,5\}$. Without loss of generality, assume that $\phi(m_1) \geq \phi(m_2) \geq \phi(m_3) \geq \phi(m_4)\geq \phi(m_5)$. Now, we have the following subcases: 
 
  \textbf{Subcase-3.1:} $\phi(m_1)=\phi(m_2)=\phi(m_3)=\phi(m_4)=\phi(m_5)=1.$ Then $m_i=2$ for every $i\in \{1,2,3,4,5\}$. Consequently by Remark \ref{remark maximal}, $o(G)=6$. By Table \ref{table}, no such groups exists.
  
  \textbf{Subcase-3.2:} $\phi(m_1)=2$ and $\phi(m_2)=\phi(m_3)=\phi(m_4)=\phi(m_5)=1.$ It follows that $m_1\in \{3,4,6\}$ and $m_i=2$ for every $i\in \{2,3,4,5\}$. Consequently, by Remark \ref{remark maximal}, $o(G)\in \{7,8,10\}$. Note that $o(G)=7$ if $m_1=3$, but $3$ does not divide $7$ and $o(G)=10$ if $m_1=6$ but $6$ does not divide $10$. It follows that $o(G)=8$. By Table \ref{table}, $G\cong D_8$.
  
   \textbf{Subcase-3.3:} $\phi(m_1)=\phi(m_2)=2$ and $\phi(m_3)=\phi(m_4)=\phi(m_5)=1.$ Then $m_1,m_2\in \{3,4,6\}$ and $m_3=m_4=m_5=2$. For distinct $i,j\in \{1,2\}$, if $m_i=3$ and $m_j=t$, where $t\in \{3,4,6\}$, then $o(G)\in \{8,9,11\}$. Since there exists  subgroups of order $2$ and $3$, it follows that no such group exists. For distinct $i,j\in \{1,2\}$, if $m_i=4$ and $m_j=t$, where $t\in \{4,6\}$, then $o(G)\in \{9,10,11,12\}$ (see Remark \ref{remark maximal}). The existence of a subgroup of order $4$ implies that $o(G)=12$. By Table \ref{table}, no such groups exists. If $m_1=m_2=6$, then by Remark \ref{remark maximal}, $o(G)\in \{12,13,14\}$ and therefore $o(G)=12$. By Table \ref{table}, no such groups exists.
  
  \textbf{Subcase-3.4:} $\phi(m_1)=\phi(m_2)=\phi(m_3)=2$ and $\phi(m_4)=\phi(m_5)=1$. It follows that $m_1,m_2,m_3\in \{3,4,6\}$ and $m_4=m_5=2$. If $m_1=m_2=m_3=3$, then by Remark \ref{remark maximal}, $o(G)=9$. But there exists a maximal cyclic subgroup of order $2$. Thus, no such group exists. For distinct $i,j,k\in \{1,2,3\}$, let $m_i=3$, $m_j=s$ and $m_k=t$, where $s\in \{3,4,6\}$ and $t\in \{4,6\}$. Further, suppose that $x$ is the element of order $2$ in $M_k$. Then the graph induced by the set $\g \cup \{x\}$ contains $K_{4,5}$ as a subgraph. Consequently, by Theorem \ref{planarcondition}, we obtain $\gamma (\d) \geq 2$ and $\overline{\gamma}(\d) \geq 3$. For distinct $i,j,k\in \{1,2,3\}$, let $m_i=4$, $m_j=6$ and $m_k=t$, where $t\in \{4,6\}$. Further, suppose that $x$ is an element of order $3$ in $M_j$. Then the subgraph of $\d$ induced by the set $\g \cup \{x\}$ contains a subgraph isomorphic to $K_{4,5}$. Consequently, by Theorem \ref{planarcondition}, we obtain $\gamma (\d) \geq 2$ and $\overline{\gamma}(\d) \geq 3$.\\
   If $m_1=m_2=m_3=4$ then $o(G)\in \{10,11,12\}$ (see Remark \ref{remark maximal}). The existence of a subgroup of order $4$ implies that $o(G)=12$. By Table \ref{table}, no such group exists. If $m_1=m_2=m_3=6$, then the subgraph induced by the set $\g \cup \{x,y\}$ contains $K_{4,5}$ as a subgraph, where $x$ and $y$ are elements of order $2$ and $3$, respectively, in $M_1$. 
   
  \textbf{Subcase-3.5:} $\phi(m_1)\geq 2, \ \phi(m_2)\geq 2, \ \phi(m_3)\geq 2, \ \phi(m_4)\geq 2$ and $\phi(m_5)\geq 1$. It follows that the subgraph of $\d$ induced by the set $\g$ contains a subgraph isomorphic to $K_{4,5}$. Consequently, by Theorem \ref{planarcondition}, we obtain $\gamma (\d) \geq 2$ and $\overline{\gamma}(\d) \geq 3$.
  
    \textbf{Subcase-3.6:} $\phi(m_1)\geq 4$ and $\phi(m_i)=1$ for every $i\in \{2,3,4,5\}$. By using the similar argument given in Subcase-$1.5$ of Theorem \ref{unicyclic}, no such group exists.
    
    \textbf{Subcase-3.7:} $\phi(m_1)\geq 4, \ \phi(m_2)\geq 2$ and $\phi(m_i)\geq 1$ for every $i\in \{3,4,5\}$. By the similar argument used in Subcase-$3.5$, we get $\gamma (\d) \geq 2$ and $\overline{\gamma}(\d) \geq 3$.
    
\noindent\textbf{Case-4: } $|\m |=6$. Let $M_1, M_2, M_3, M_4, M_5$ and $M_6$ be the maximal cyclic subgroups of $G$ such that $|M_i|=m_i$ for $i\in \{1,2,3,4,5,6\}$. Without loss of generality, assume that $\phi(m_1) \geq \phi(m_2) \geq \phi(m_3) \geq \phi(m_4)\geq \phi(m_5)\geq \phi(m_6)$. Now, we have the following subcases:

  \textbf{Subcase-4.1:} $\phi(m_i)=1$ for each $i\in \{1,2,3,4,5,6\}.$ It follows that $o(G)=7$ which is not possible because $|M_i|=2$.
  
   \textbf{Subcase-4.2:} $\phi(m_1)=2, \ \phi(m_i)=1$ for each $i\in \{2,3,4,5,6\}$. Then $m_1\in \{3,4,6\}$. Consequently, by Remark \ref{remark maximal}, $o(G)\in \{8,9,11\}$. Note that $o(G)=8$ if $m_1=3$ but $3$ does not divide $8$, $o(G)=9$ if $m_1=4$ but $4$ does not divide $9$ and $o(G)=11$ if $m_1=6$ but $6$ does not divide $11$. Thus, no such group exists.
   
    \textbf{Subcase-4.3:} $\phi(m_1)=\phi(m_2)=2$ and $\phi(m_i)=1$ for every $i\in \{3,4,5,6\}$. It follows that $m_1,m_2\in \{3,4,6\}$ and $m_i=2$. If $m_1=m_2=3$ then by Remark \ref{remark maximal}, $o(G)=9$. No such groups exists because we have a maximal cyclic subgroup of order $2$. For distinct $i,j\in \{1,2\}$, if $m_i\geq 3$ and $m_j=t$, where $t\in \{4,6\}$, then the subgraph of $\d$ induced by the set $\g \cup \{x\}$, where $x$ is the elements of order $2$ in $M_j$, contains a subgraph isomorphic to $K_{4,5}$. Consequently, we get $\gamma (\d) \geq 2$ and $\overline{\gamma}(\d) \geq 3$.
   
     \textbf{Subcase-4.4:} $\phi(m_1)\geq 2, \ \phi(m_2)\geq 2, \ \phi(m_3)\geq 2 \ $ and $\phi(m_i) \ge 1$ for every $i\in \{4,5,6\}.$ By the similar argument given in Subcase-$3.5$, $\gamma (\d) \geq 2$ and $\overline{\gamma}(\d) \geq 3$.
     
   \textbf{Subcase-4.5:} $\phi(m_1)\geq 4, \ \phi(m_i)\geq 1$ for every $i\in \{2,3,4,5,6\}.$ By the similar argument given in Subcase-$3.5$, $\gamma (\d) \geq 2$ and $\overline{\gamma}(\d) \geq 3$.
   
      \noindent\textbf{Case-5: } $|\m |=7$. Now, we have the following subcases:
      
 \textbf{Subcase-5.1:} $\phi(m_i)=1$ for every $i\in \{1,2,3,4,5,6,7\}.$ Then by Remark \ref{remark maximal}, $o(G)=8$. By Table \ref{table}, $G\cong \mathbb{Z}_2\times \mathbb{Z}_2\times \mathbb{Z}_2$.
 
   \textbf{Subcase-5.2:} $\phi(m_1)=2, \ \phi(m_i)=1$ for every $i\in \{2,3,4,5,6,7\}.$ It follows that  $m_1\in \{3,4,6\}$ and $m_i=2$. Consequently, by Remark \ref{remark maximal}, $o(G)\in \{9,10,12\}$. Note that $o(G)=9$ when $m_1=3$. But the existence of subgroup of order $2$ makes $o(G)=9$ impossible. Also, $o(G)=10$ when $m_1=4$. But this is not possible because of the existence of a subgroup of order $4$. If $o(G)=12$, then by Table \ref{table}, we get $G\cong D_{12}$.
   
    \textbf{Subcase-5.3:} $\phi(m_1)\geq 2, \ \phi(m_2)\geq 2,$ and $\phi(m_i)\geq 1$ for every $i\in \{3,4,5,6,7\}.$ By the similar argument used in Subcase-$3.5$, $\gamma (\d) \geq 2$ and $\overline{\gamma}(\d) \geq 3$.
    
   \textbf{Subcase-5.4:} $\phi(m_1)\geq 4, \ \phi(m_i)\geq 1$ for every $i\in \{2,3,4,5,6,7\}.$ Similar to the Subcase-$3.5$, we obtain $\gamma (\d) \geq 2$ and $\overline{\gamma}(\d) \geq 3$.
   
           \noindent\textbf{Case-6: } $|\m |\geq 8$. In this case $\d$ contains a subgraph isomorphic to $K_{8}$. Thus, $\gamma (\d) \geq 2$ and $\overline{\gamma}(\d) \geq 4$.
   
Conversely, If $G\cong \mathbb{Z}_2\times \mathbb{Z}_2$ then $\d \cong K_3$. Thus, $\d$ is an outerplaner graph.\\
If $G\cong S_3$ then $\d$ contains a subgraph isomorphic to $K_4$. Consequently, $\d$ is not an outerplanar graph. The graph $\overline{\mathcal{P}_E(S_3^*)}$ is planar (see Figure \ref{S3}).
\begin{figure}[h]
    \centering
    \includegraphics[width=0.35\textwidth]{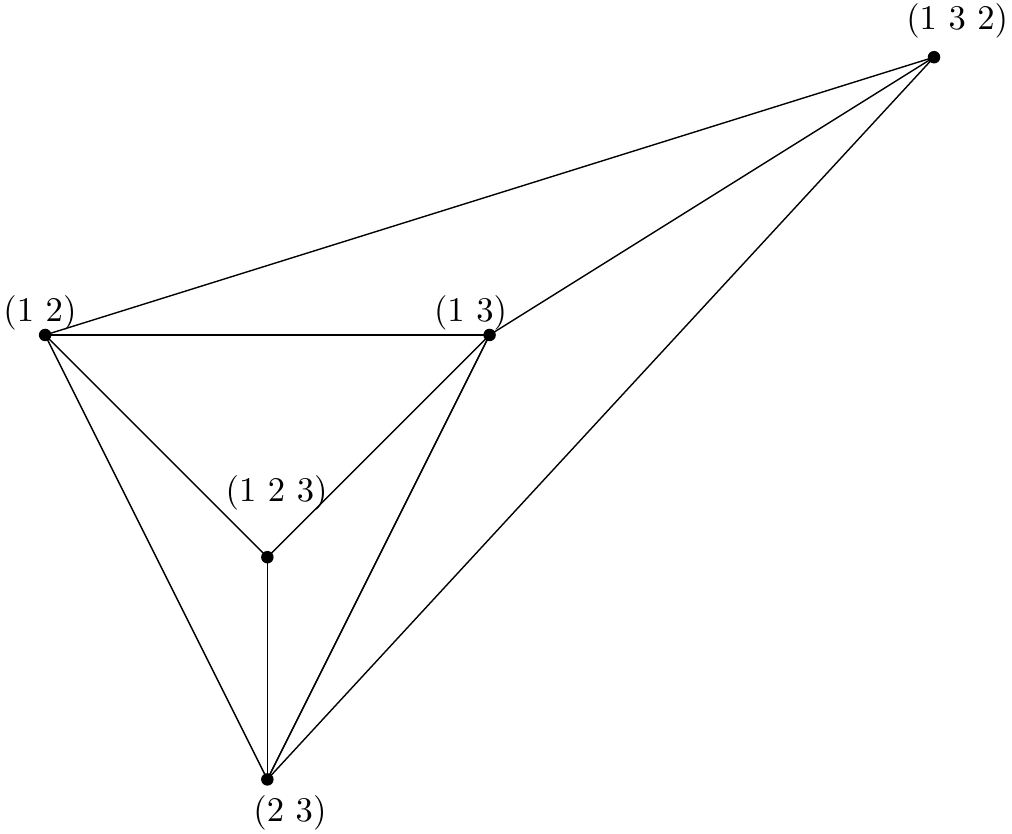}
    \caption{A planar drawing of $\overline{\mathcal{P}_E(S_3^*)}$.}
    \label{S3}
\end{figure}

If $G\cong Q_8$ then note that the subgraph induced by the set $\{i,-i,j,-j,k\}$ contains $K_{2,3}$ as a subgraph. It follows that $\d$ is not an outerplanar graph. Moreover, the graph $\overline{\mathcal{P}_E(Q_8^*)}$ is planar (cf. Figure \ref{Q8}).
\begin{figure}[h!]
    \centering
    \includegraphics[width=0.35\textwidth]{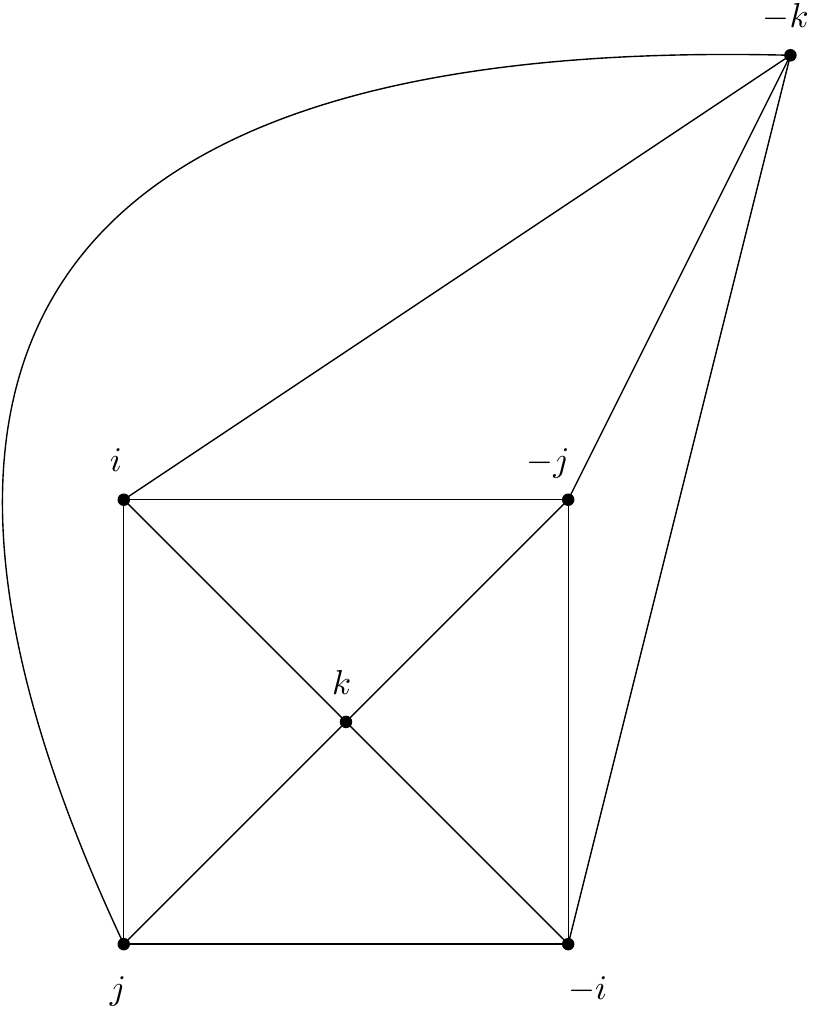}
    \caption{A planar drawing of $\overline{\mathcal{P}_E(Q_8^*)}$.}
    \label{Q8}
\end{figure}

If $G\cong \mathbb{Z}_2\times \mathbb{Z}_4$ then $\d$ contains a subgraph isomorphic to $K_{3,3}$. Consequently, $\d$ is not a planar graph. A toroidal and projective embedding of $\d$ given in the Figure \ref{Z2Z4T} and \ref{Z2Z4}.
\begin{figure}[h]
    \centering
    \includegraphics[width=0.35\textwidth]{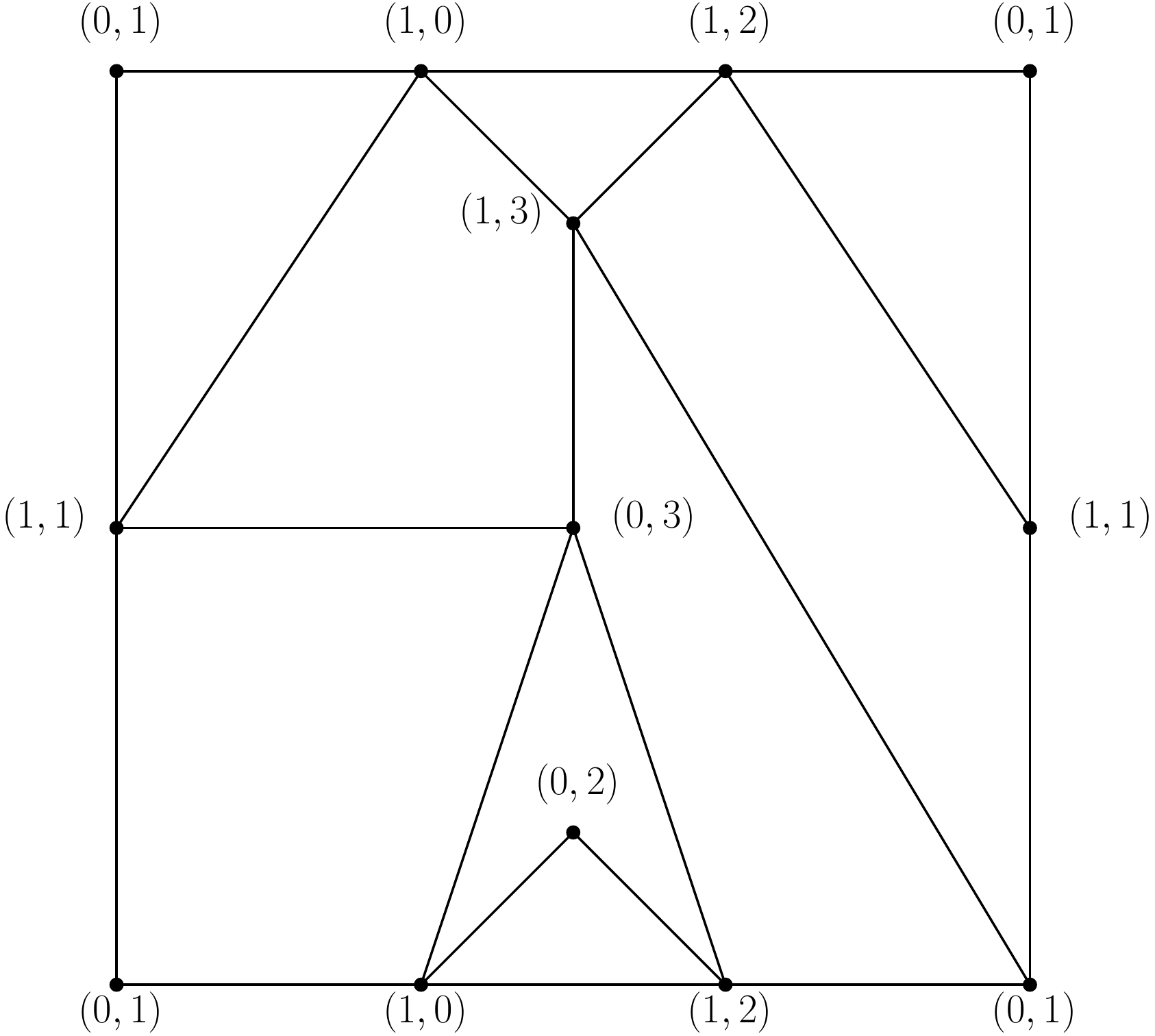}
    \caption{A toroidal embedding of $\overline{\mathcal{P}_E((\mathbb{Z}_2\times \mathbb{Z}_4)^*)}$.}
    \label{Z2Z4T}
\end{figure}\\
\begin{figure}[h]
    \centering
    \includegraphics[width=0.35\textwidth]{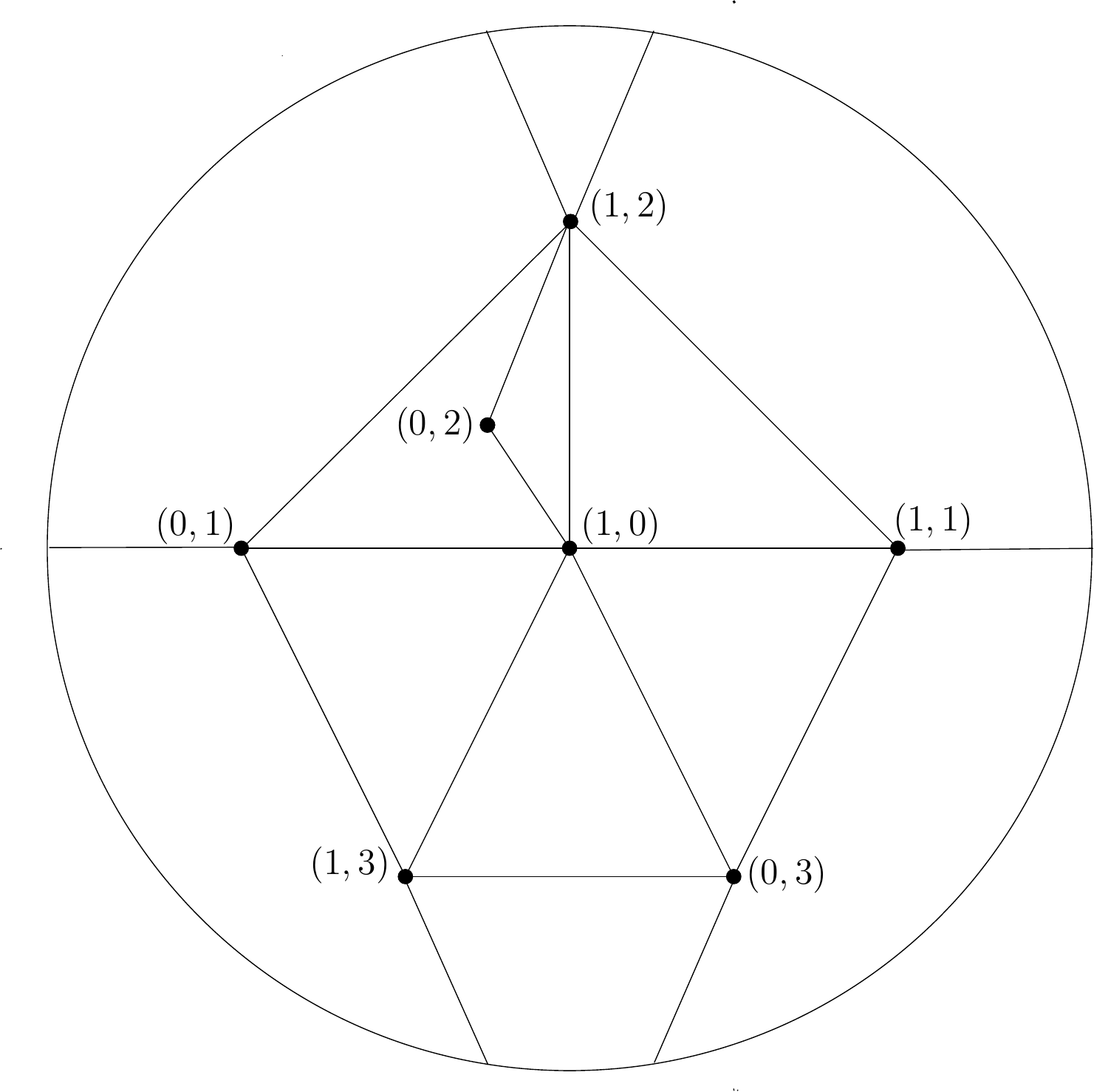}
    \caption{A projective embedding of $\overline{\mathcal{P}_E((\mathbb{Z}_2\times \mathbb{Z}_4)^*)}$.}
    \label{Z2Z4}
\end{figure}

If $G\cong D_8$ then notice that the subgraph of $\d$ induced by the set $\{x,y,xy,x^2y,x^3y\}$ contains $K_5$ as a subgraph. Consequently, $\d$ is not a planar graph. A toroidal and projective embedding of $\d$ given in Figure \ref{D4T} and Figure \ref{D4} respectively.
\begin{figure}[h]
    \centering
    \includegraphics[width=0.35\textwidth]{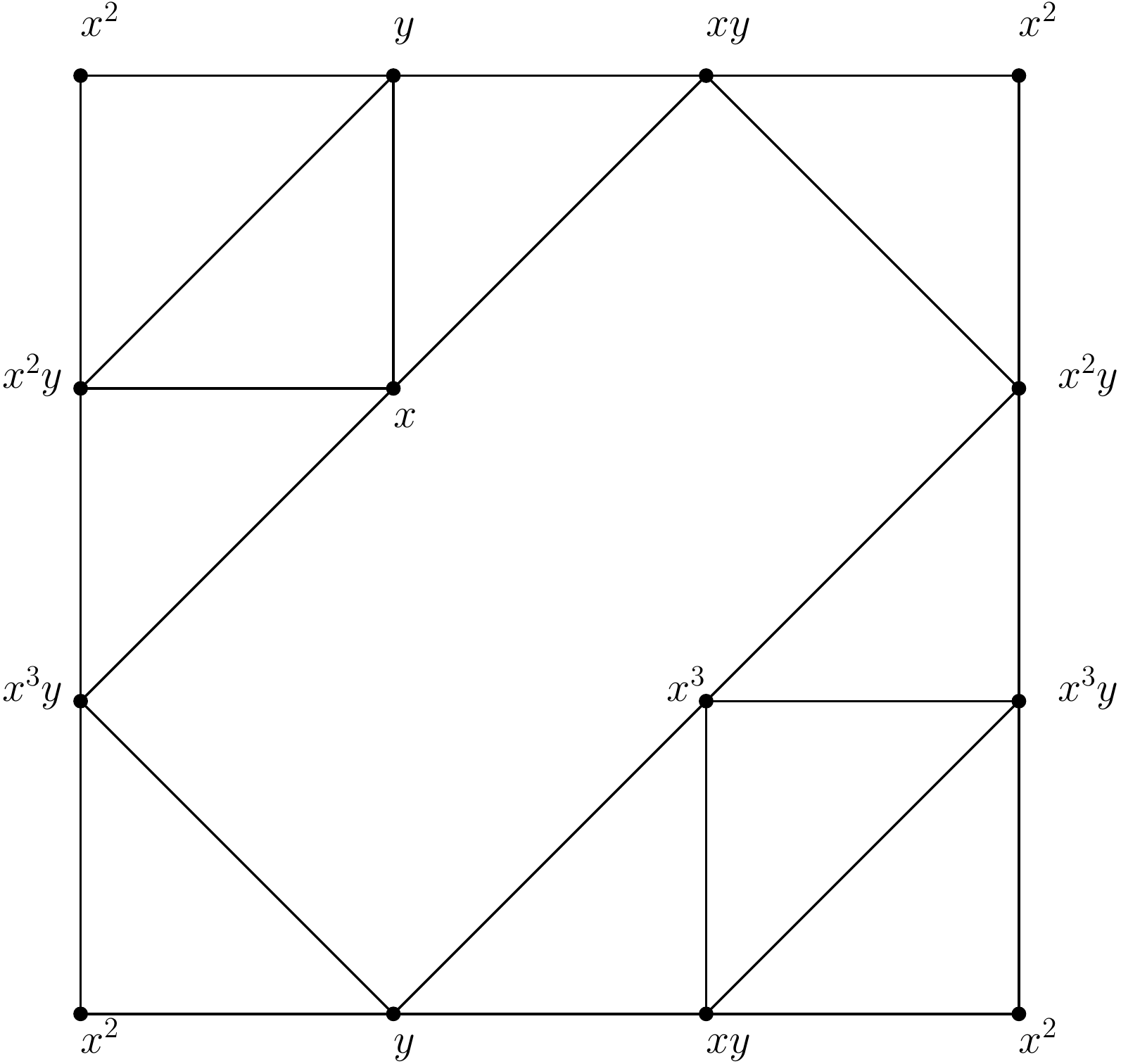}
    \caption{A toroidal embedding of $\overline{\mathcal{P}_E(D_8^*)}$.}
    \label{D4T}
\end{figure}\\
\begin{figure}[h]
    \centering
    \includegraphics[width=0.35\textwidth]{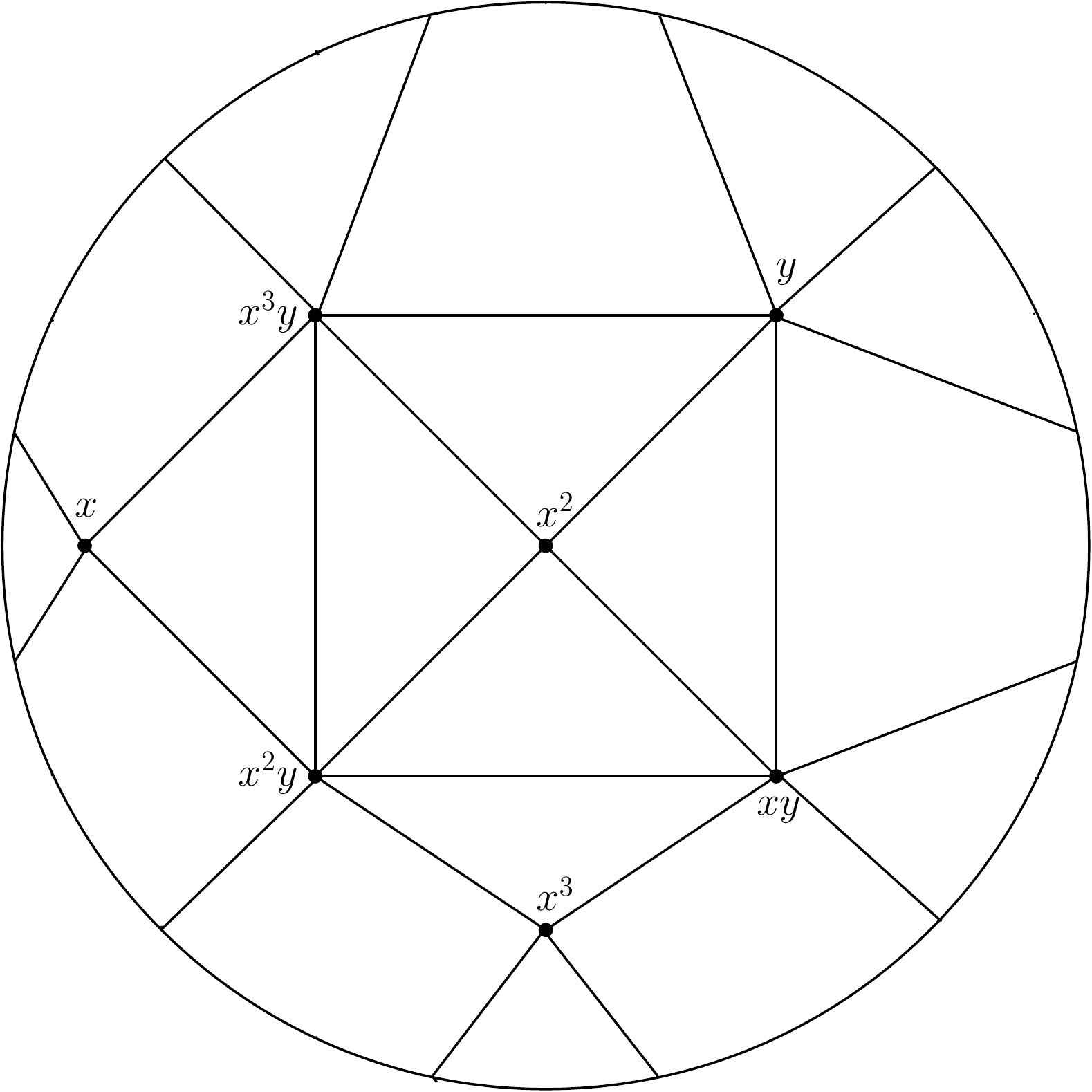}
    \caption{A projective embedding of $\overline{\mathcal{P}_E(D_8^*)}$.}
    \label{D4}
\end{figure}

If $G\cong \mathbb{Z}_3\times \mathbb{Z}_3$ then $\d \cong K_{2,2,2,2}$ and it contains a subgraph isomorphic to $K_{4,4}$. Consequently, $\gamma(\d)\geq 1$ and by \cite{a.jungerman1979nonorientable}, $\overline{\gamma}(\d)= 3$. A toroidal embedding of $\d$ is given in Figure \ref{Z3Z3T}.
\begin{figure}[h]
    \centering
    \includegraphics[width=0.35\textwidth]{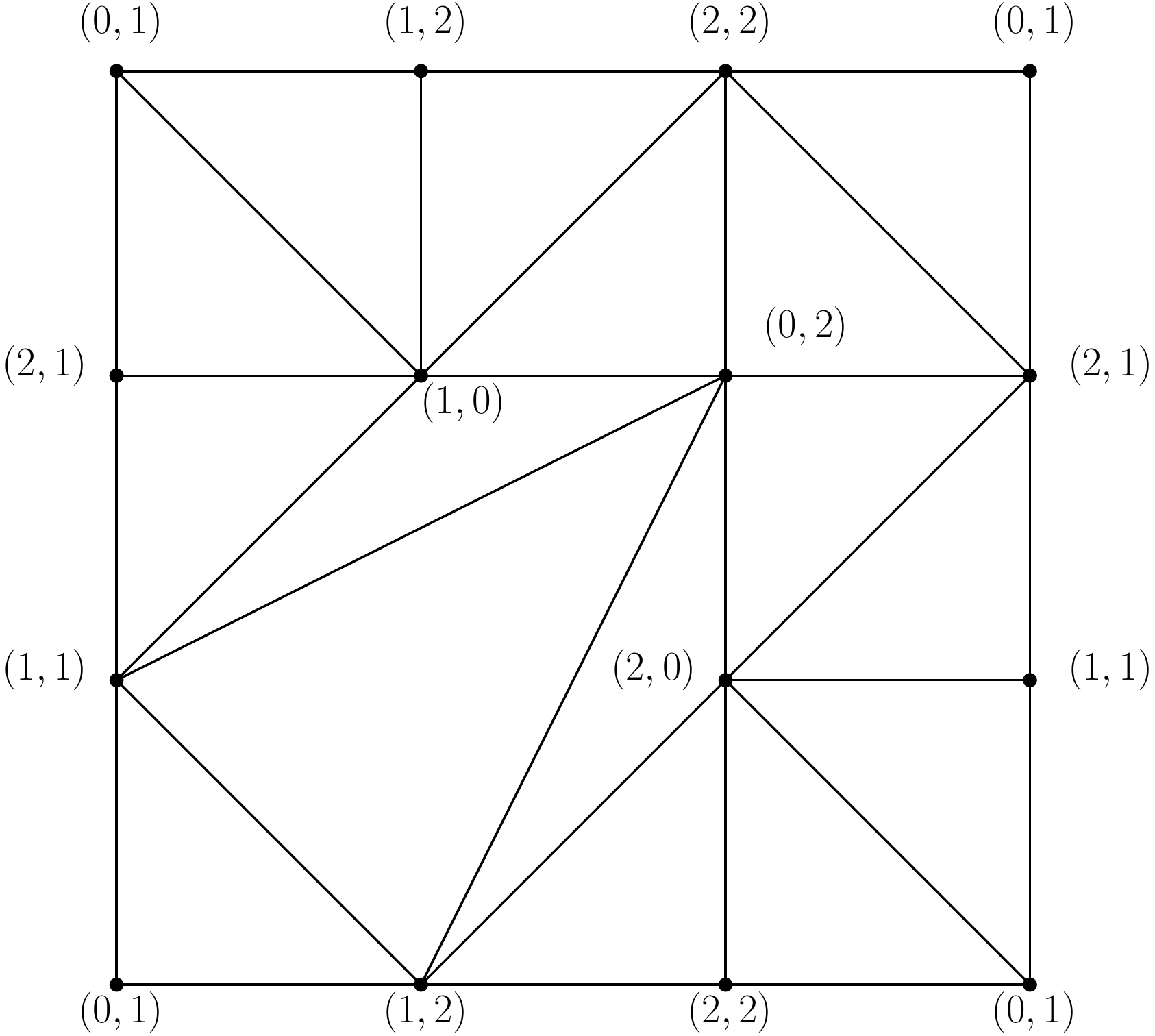}
    \caption{A toroidal embedding of $\overline{\mathcal{P}_E((\mathbb{Z}_3\times \mathbb{Z}_3)^*)}$.}
    \label{Z3Z3T}
\end{figure}

If $G\cong \mathbb{Z}_2\times \mathbb{Z}_6$ then $\d \cong K_{3,3,3}$ contains a subgraph isomorphic to $K_{4,4}$. Consequently, $\gamma(\d)\geq 1$ (see Theorem \ref{planarcondition}) and by {\rm \cite[Theorem 10]{a.Ellingham2006}},  $\overline{\gamma}(\d)= 3$. A toroidal embedding of $\d$ is given in Figure \ref{Z2Z6T}.
\begin{figure}[h]
    \centering
    \includegraphics[width=0.35\textwidth]{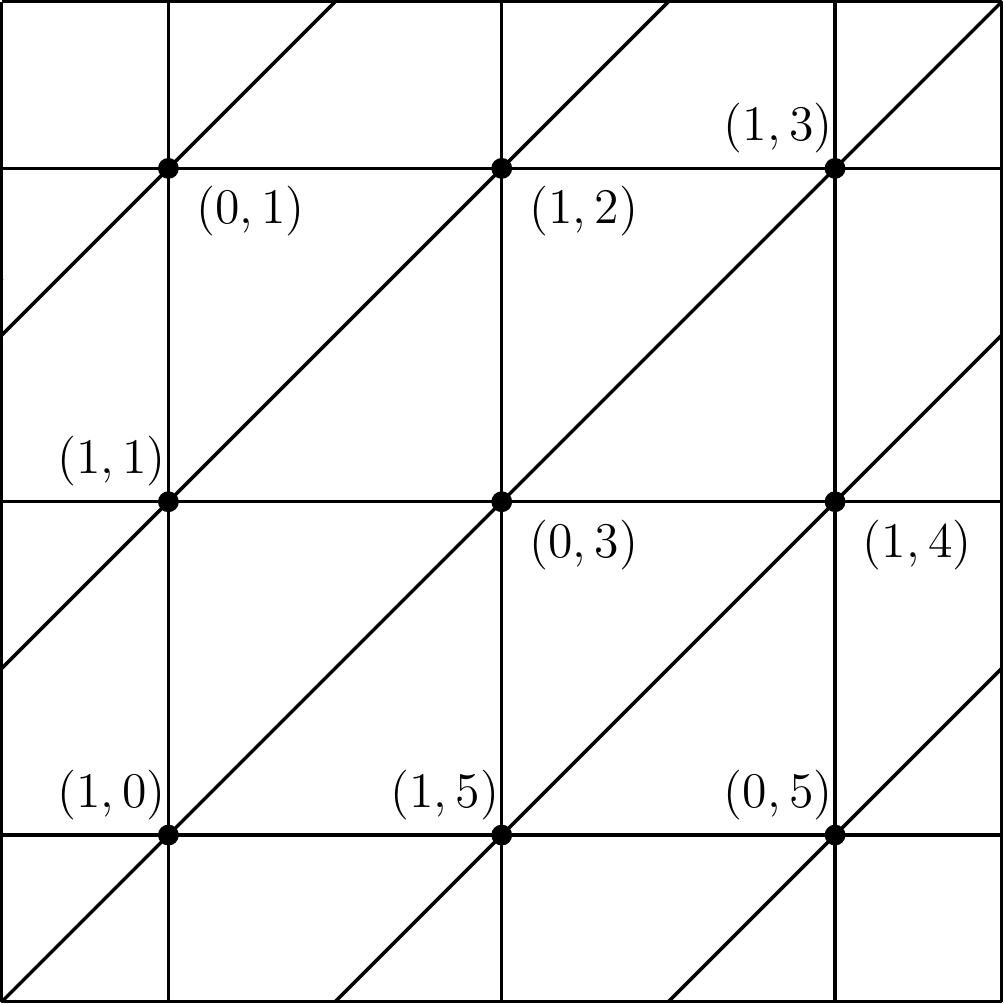}
    \caption{A toroidal embedding of $\overline{\mathcal{P}_E((\mathbb{Z}_2\times \mathbb{Z}_6)^*)}$.}
    \label{Z2Z6T}
\end{figure}\\
If $G\cong \mathbb{Z}_2\times \mathbb{Z}_2\times \mathbb{Z}_2$ then $\d\cong K_7$. Consequently, by Theorem \ref{planarcondition}, $\overline{\gamma}(\d)=3$ and $\gamma(\d)=1$.
If $G\cong D_{12}$ then $\overline{\mathcal{P}_E(G^*)}$ contains a subgraph isomorphic to $K_{5,6}$. Consequently, by Theorem \ref{planarcondition}, $\gamma (\d) \geq 3$ and $\overline{\gamma}(\d) \geq 6$.
\end{proof}
\section{Acknowledgement}

The first author gratefully acknowledge for providing financial support to CSIR  (09/719(0110)/2019-EMR-I) government of India.

\bibliographystyle{abbrv}
\bibliography{References}
\vspace{1cm}
\noindent
{\bf Parveen\textsuperscript{\normalfont 1}, Jitender Kumar\textsuperscript{\normalfont 1}}
\bigskip

\noindent{\bf Addresses}:

\vspace{5pt}

\noindent
\textsuperscript{\normalfont 1}Department of Mathematics, Birla Institute of Technology and Science Pilani, Pilani-333031, India.

\end{document}